\newtheorem{lemma}{Lemma}[section]
\newtheorem{prop}[lemma]{Proposition}
\newtheorem{thm}[lemma]{Theorem}
\theoremstyle{definition}
\newtheorem{definition}[lemma]{Definition}
\newtheorem{example}[lemma]{Example}
\newtheorem{claim}[lemma]{Claim}
\def\Z{\mathbb Z}
\def\N{\mathbb N}
\def\leq{\leqslant}
\def\geq{\geqslant}
\begin{document}
\begin{frontmatter}
\title{Palindromic closures using  multiple antimorphisms}

\author[FMFI UK]{Tatiana \textsc{Jajcayov\'a}}
\author[FJFI]{Edita \textsc{Pelantov\'a}}
\author[FIT]{\v St\v ep\'an \textsc{Starosta}\corref{cor1}}

\cortext[cor1]{Corresponding author}
\address[FMFI UK]{Department of Applied Informatics, FMFI Comenius University,\\
Mlynsk\' a dolina, 842 48 Bratislava, Slovakia}
\address[FJFI]{
Department of Mathematics, FNSPE, Czech Technical University in Prague, \\
Trojanova 13, 120 00 Praha 2, Czech Republic}

\address[FIT]{
Department of Applied Mathematics, FIT, Czech Technical University in Prague, \\
Th\' akurova 9, 160 00 Praha 6, Czech Republic}


\begin{abstract}
Generalized pseudostandard word  $\bf u$, as  introduced in 2006 by de Luca and De Luca, is given by a directive sequence of letters  from an alphabet ${\cal A}$ and by a directive sequence of involutory antimorphisms acting on ${\cal A}^*$.
Prefixes  of $\bf u$ with increasing length are constructed using pseudopalindromic closure operator.

\noindent We show that generalized Thue--Morse words ${\bf t}_{b,m}$, with $b, m \in \N$ and  $b, m  \geq 2$, are generalized pseudostandard words if and only if ${\bf t}_{b,m}$ is a periodic word or $b \leq m$. This extends the result of de Luca and De Luca obtained for the classical Thue--Morse words.
\end{abstract}


\begin{keyword}
palindromic closure, generalized Thue--Morse word, involutory antimorphism
\end{keyword}


\end{frontmatter}


\section{Introduction}

\textit{Palindromic closure} of a finite word $w$ is the shortest
palindrome having $w$ as a prefix. This concept was introduced in 1997
 by Aldo de Luca for words over the binary alphabet. In \cite{Lu}, De Luca
showed that any standard Sturmian word is a limit of a sequence of
 palindromes $(w_n)$, where $w_0$  equals the empty word and $w_{n+1}$
 is the  palindromic closure of $w_n\delta_{n+1}$ for some letter
$\delta_{n+1}$ from the binary alphabet. And, vice versa, the limit
of such a sequence is  always a standard Sturmian word.
 This construction was
extended to any finite alphabet $\mathcal{A}$ by Droubay, Justin
and Pirillo in \cite{DrJuPi}, and   words arising by their
construction are called \textit{standard episturmian words}. The sequence
$\delta_1\delta_2\delta_3\ldots $ of letters that are added successively
at each step is referred to as  the \textit{directive sequence} of the standard
episturmian word.

An important generalization of standard episturmian words appeared in
\cite{LuLu}, where the palindromic closure is replaced by
\textit{$\vartheta$-palindromic closure} with $\vartheta$  an arbitrary
involutory antimorphism of the free monoid $\mathcal{A}^*$. The
corresponding words are called \textit{$\vartheta$-standard words}, or
\textit{pseudostandard words}.

A further generalization was provided in 2008 by Michelangelo
Bucci, Aldo de Luca, Alessandro De Luca, Luca Q. Zamboni in
\cite{BuLuLuZa2}, where the sequence $(w_n)$
is allowed to start with an arbitrary finite word $w_0$, and the
limit word is a \textit{$\vartheta$-standard word with the seed $w_0$}.   
Words obtained by these generalizations are in some sense quite similar to  the standard episturmian words:
by results of Bucci and De Luca \cite{BuLu},
any $\vartheta$-standard word with a seed is a morphic image of
a standard episturmian word.

The described constructions of $(w_n)$ guarantee that the language of
a standard episturmian word contains  infinitely many palindromes.

Droubay, Justin and Pirillo in \cite{DrJuPi}  deduced that any
finite word $w$ contains at most $|w|+1 $ distinct palindromes  ($|w|$ stands for the length of $w$). 
A word $w$ with exactly $|w|+1$ palindromes is called \textit{rich} (in \cite{GlJuWiZa}), or  \textit{full}
(in \cite{BrHaNiRe}).  An infinite  word is \textit{rich} if every finite
 factor of this word is  rich.  Examples of rich words include all
episturmian words, see \cite{DrJuPi}, two interval coding of rotations,  see
\cite{BlBrLaVu11}, words coding interval exchange transformation
with symmetric permutation, see \cite{BaMaPe}, etc.

The notion of rich word was generalized as well: the concept of 
\textit{$\vartheta$-rich} word was introduced in \cite{Sta2010}.
A $\vartheta$-rich word is saturated by \textit{$\vartheta$-palindromes}, fixed points of the involutory antimorphism $\vartheta$, up to the highest possible level.
Another generalization of the concept of richness is a measure of how many palindromes are missing in a certain sense.
This quantity is called the \textit{palindromic defect} and it was first considered in \cite{BrHaNiRe}.
Analogously to standard episturmian words, one can prove that every $\vartheta$-standard word is
$\vartheta$-rich, and every standard word with seed has finite palindromic defect.

Again, both generalizations of rich words are not too far from the
original notion. In particular, any uniformly recurrent
$\vartheta$-rich word and any uniformly recurrent word with finite
defect is just a morphic image of a rich word, see \cite{PeSta_Milano_IJFCS}.
Nevertheless, the operation of palindromic closure enables us to
construct a big class of rich words.

Up to this point, we were concerned with properties of words with respect to one
fixed involutory antimorphism $\vartheta$ on $\mathcal{A}^*$.

In the last Section of the paper \cite{LuLu}, De Luca and de Luca introduced 
an even more general concept - generalized  pseudostandard words. They  considered a  set
$\mathcal{I}$ of involutory antimorphisms over $\mathcal{A}^*$ and
beside a directive sequence $\Delta =
\delta_1\delta_2\delta_3\ldots $   of letters from $\mathcal{A}$,
 also  a directive sequence of antimorphisms $\Theta =
\vartheta_1 \vartheta_2\vartheta_3\ldots $ from $\mathcal{I}$. The
construction of $(w_n)$ starts  with the empty word $w_0$ and
recursively, $w_{n}$ is the $\vartheta_n$-palindromic closure of
$w_{n-1}\delta_n$.

Then De Luca and de Luca focused on the prominent Thue--Morse word
${\bf u}_{TM}$.  They showed that ${\bf u}_{TM}$ is a generalized
pseudostandard word with the directive sequences $\Delta = 01111 \ldots =
01^\omega$ and $\Theta = RERERE \ldots = (RE)^\omega$, where $R$ denotes the mirror
image operator, and $E$ the antimorphism which exchanges the
letters $0\leftrightarrow 1$.

The example of the Thue--Morse word illustrates that generalized pseudostandard words substantially differ from the previous notions where only one antimorphism is used.
This is due to the fact that the Thue--Morse word is not a morphic  image of a standard episturmian word as all $\vartheta$-standard words are.
This can be seen when comparing the factor complexities of a standard episturmian word, which is of the form $an+b$ except for finitely many integers $n$ (see \cite{DrJuPi}), and the factor complexity of the Thue--Morse word (see \cite{Br89} or \cite{LuVa}).

The results of  \cite{BlPaTrVu} confirm that the  notion of
the generalized pseudostandard word is very fruitful. In particular,
Blondin-Mass\'e,  Paquin,  Tremblay, and Vuillon  showed that any
standard  Rote word is a generalized pseudostandard word. Since the
Rote words are defined over the binary alphabet, their directive sequences
$\Theta$ contain antimorphisms $R$ and $E$ only.

In this article we focus on the so-called generalized Thue--Morse words.
Given two integers $b$ and $m$ such that $b > 1$ and $m > 1$, we denote the generalized Thue--Morse word by ${\bf t}_{b,m}$.
The alphabet of ${\bf t}_{b,m}$ is $\mathcal{A}= \mathbb{Z}_m = \{0, \ldots, m-1 \}$.
 For a given integer base $b$, the number $s_b(n)$ denotes the digit sum
of the expansion of number $n$ in the base $b$. The word ${\bf
t}_{b,m}$ is defined
 \begin{equation}\label{DefThueMorse} {\bf
t}_{b,m}= \bigl(s_b(n) \bmod m\bigr)_{n=0}^\infty.
 \end{equation}
 In this
notation the classical Thue--Morse word equals ${\bf t}_{2,2}$. As
shown in \cite{Sta2011},  the language of ${\bf t}_{b,m}$ is closed
under a finite group containing $m$ involutory antimorphisms. This
group is isomorphic to the dihedral group $I_2(m)$.  Our aim in this paper is to prove  the following theorem:

\begin{thm}\label{main}  The generalized Thue--Morse word  ${\bf t}_{b,m}$ is a generalized pseudostandard word if and only
if \  $b\leq m$  \ or \  $ b-1 = 0 \pmod m$.
\end{thm}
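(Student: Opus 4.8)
\section*{Proof proposal}

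The plan is to prove the two implications separately, after recording the structural facts that drive both. First I set up the substitution picture: since $s_b(bn+j)=s_b(n)+j$ for $0\le j<b$, the block of ${\bf t}_{b,m}$ at positions $bn,\dots,bn+b-1$ equals $\varphi\bigl({\bf t}_{b,m}[n]\bigr)$, so ${\bf t}_{b,m}$ is the fixed point of the substitution $\varphi$ on $\mathbb{Z}_m$ given by $\varphi(a)=a(a{+}1)\cdots(a{+}b{-}1)$ (entries mod $m$). I write $\Psi_c$, for $c\in\mathbb{Z}_m$, for the involutory antimorphism acting on letters by $a\mapsto c-a$; these are exactly the $m$ antimorphisms of the dihedral group $I_2(m)$ under which, by \cite{Sta2011}, the language of ${\bf t}_{b,m}$ is closed, and $w=w_1\cdots w_L$ is a $\Psi_c$-palindrome iff $w_i+w_{L+1-i}=c$ for all $i$. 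The computational engine is the commutation relation $\varphi\circ\Psi_c=\Psi_{c+b-1}\circ\varphi$, checked by evaluating both sides on one letter; iterating it shows that the prefix $\varphi^k(0)$ of length $b^k$ is a $\Psi_{k(b-1)}$-palindrome, and that $\varphi$ carries $\Psi_c$-palindromic prefixes to $\Psi_{c+b-1}$-palindromic prefixes.

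The case $b\equiv1\pmod m$ is immediate: then $b^j\equiv1$, so $s_b(n)\equiv n$ and ${\bf t}_{b,m}=(01\cdots(m-1))^\omega$. For this periodic word every prefix of length $n$ is already a $\Psi_{(n-1)\bmod m}$-palindrome, since $a_i+a_{n-1-i}\equiv i+(n-1-i)=n-1\pmod m$; hence ${\bf t}_{b,m}$ is generated with $\Delta={\bf t}_{b,m}$ and $\Theta=(\Psi_0\Psi_1\cdots\Psi_{m-1})^\omega$, every closure being trivial. This leaves the genuinely opposite regimes $b\le m$ and $b>m$ (with $b\not\equiv1$) to be treated.

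For $b\le m$ I would build $(\Delta,\Theta)$ self-similarly from $\varphi$. The base layer is the prefixes $01\cdots j$ ($0\le j\le b-1$): because $b\le m$ their letters are pairwise distinct, $01\cdots j$ is a $\Psi_j$-palindrome, and it arises from $01\cdots(j{-}1)$ by appending $j={\bf t}_{b,m}[j]$ and a trivial $\Psi_j$-closure. The inductive step is the key lemma: \emph{for $b\le m$, $\varphi$ transports a valid one-step closure to a valid one-step closure}, so that if a $\Psi_c$-palindromic member $w$ of the sequence is followed by the $\Psi_{c'}$-closure $w'$ of $w\delta$, then $\varphi(w)\prec\varphi(w')$ are again consecutive members of a valid closure construction for ${\bf t}_{b,m}$. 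The only place $b\le m$ enters is here: the distinct letters inside each block $\varphi(a)$ forbid $\varphi$ from manufacturing any $\Psi$-palindromic suffix that is not the $\varphi$-image of a $\Psi$-palindromic suffix of the preimage; hence the longest pseudopalindromic suffix, and therefore the whole closure, is transported correctly. Bootstrapping the base layer through $\varphi$ then produces explicit directive sequences (with $\Theta$ drawing from $\{\Psi_0,\dots,\Psi_{m-1}\}$) whose limit is ${\bf t}_{b,m}$.

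For $b>m$ with $b\not\equiv1$ the word is aperiodic, and I would prove non-constructibility by exhibiting a pseudopalindromic prefix that no directive sequence can reach. Now the wraparound in $\varphi(0)=0\,1\cdots(m-1)\,0\cdots$ repeats letters inside a single block and so creates \emph{spurious} short $\Psi$-palindromic suffixes. Concretely I would locate a pseudopalindromic prefix $P$ of length $L$ together with the largest pseudopalindromic prefix $Q$ of length $<L$, arranged so that (i) $Q$ is the unique pseudopalindromic prefix of length in $\bigl[(L-1)/2,\,L\bigr)$, which by the elementary bound that any $\vartheta$-closure of $Q\delta$ has length at most $2|Q|+1$ forces any closure reaching $P$ to start from $Q$; and (ii) with $\delta$ the letter of ${\bf t}_{b,m}$ after $Q$, the word $Q\delta$ has a $\Psi$-palindromic suffix strictly longer than needed, so every admissible closure terminates at a $\Psi$-palindrome of length $<L$ which, not being a pseudopalindromic prefix at all, is not a prefix of ${\bf t}_{b,m}$. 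Thus $P$ is unreachable and the construction stalls. The main obstacle is exactly this suffix bookkeeping: proving in the sufficiency step that $\varphi$ creates no new pseudopalindromic suffixes when $b\le m$, and producing in the necessity step a single prefix $P$ whose unique eligible predecessor is blocked by a wraparound-induced long suffix, uniformly for all $b>m$ with $b\not\equiv1\pmod m$ and robustly against the freedom in choosing $\Delta$ and $\Theta$.
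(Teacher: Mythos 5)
Your plan reproduces the paper's architecture almost exactly: the same three-way split (periodic case $b\equiv 1\pmod m$, then $b\le m$, then $b>m$ with $b\not\equiv 1$), the same commutation relation $\varphi\Psi_c=\Psi_{c+b-1}\varphi$ as the engine, the same self-similar bootstrap for sufficiency (your construction generates precisely the paper's directive sequences $\Delta=0\bigl(12\cdots(b-1)\bigr)^\omega$ and $\Theta=\bigl(\Psi_0\Psi_1\cdots\Psi_{m-1}\bigr)^\omega$ of Proposition~\ref{JdeTo}), and the same ``unique eligible predecessor plus blocked closure'' squeeze for necessity (Proposition~\ref{NejdeTo}). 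The periodic case is complete and correct. The problem is that the two statements carrying all of the difficulty are asserted rather than proved, and you flag this yourself (``the main obstacle is exactly this suffix bookkeeping''). For sufficiency, your key lemma rests on the claim that when $b\le m$, $\varphi$ cannot ``manufacture any $\Psi$-palindromic suffix that is not the $\varphi$-image of a $\Psi$-palindromic suffix of the preimage.'' As stated this is not quite the right statement: a $\Psi$-palindromic suffix of $\varphi(w)\delta$ need not align with block boundaries, and the correct structure (the paper's Lemma~\ref{pomocne}) is the \emph{padded} form $\Psi(\delta)\varphi(w')\delta$ with $w'$ a $\Psi'$-palindrome, obtained from the symmetry of jump positions. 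Even granting that, distinctness of letters inside a block does not finish the argument by itself: one must separately exclude palindromic suffixes of intermediate length $3,\dots,b$, which is a modular computation (the congruence $k\equiv b-1\pmod m$ has a solution with $0<k<b-1$ exactly when $b>m$; case ii of Lemma~\ref{konec1}), and one must run an induction on the level $n$, because the padded form only pulls a long spurious suffix at level $n$ back to a spurious suffix at level $n-1$ (Claim~\ref{claim2}). Your one-sentence justification compresses all of this away.

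For necessity the gap is more basic: you never exhibit the blocking prefix $P$, you only list the properties it should have. The paper's proof identifies it concretely as $Q=\varphi^q(0)$, $P=\varphi^q(0)\varphi^q(1)$, $\delta=1$, where $q$ is the order of $b-1$ in $(\Z_m,+)$, and the fact that the wraparound-induced suffix of length in $\{3,\dots,b\}$ appears \emph{exactly} at level $q$ and at no earlier level is again the inductive content of Claim~\ref{claim2} together with Part 3 of Lemma~\ref{konecN1}. Moreover, your squeeze (i) needs the statement that pseudopalindromic prefixes in the window are unique, which the paper derives by showing every pseudopalindromic prefix longer than $b$ is a $\varphi$-image of a shorter one (jump at position $|p|-b$, injectivity of $\varphi$, Property 5); your sketch does not contain this argument, and your window $[(L-1)/2,L)$ is also slightly off: from $|w_{i+1}|\le 2|w_i|+2$ one only gets $|w_i|\ge (L-2)/2=|Q|-1$, so the length $|Q|-1$ must be excluded separately (the paper does this using $b>2$). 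In short, your proposal is a faithful outline of the paper's proof, but the two lemmas you defer \emph{are} the proof; as it stands the argument has genuine gaps at both of its load-bearing points.
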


Unlike the case of the  standard words with seed, very little is known
about the properties of generalized pseudostandard words. In the last
section, we  propose several questions the answering of which would bring
a better understanding of the structure of such words.

Our motivation for the study of generalized pseudostandard words
stems from a desire to find  $G$-rich words recently introduced in
\cite{PeSta1}. Words that are rich in the original sense are
 $G$-rich with respect to $G=\{R, Id\}$. In \cite{Sta2011},
the last author showed that the words ${\bf t}_{b,m}$ are $I_2(m)$-rich. In
particular, the classical Thue--Morse word  is $H$-rich with
$H=\{E,R, ER, Id\}$. Using the result of \cite{BlBrLaVu11}, one can also
show that  the Rote words are $H$-rich (for definition of the Rote
words see \cite{Rote}). These examples are almost all the examples
of $G$-rich words we know for which the group $G$  is not
isomorphic to $\{R, Id\}$. We believe that generalized
pseudostandard words can provide many other new examples.

\section{Preliminaries}

By $\mathcal{A}$ we denote a finite set of symbols  usually called the \textit{alphabet}.
A \textit{finite word} $w$ over  $\mathcal{A}$ is a string $w =w_0w_1\cdots w_{n-1}$ with $w_i \in \mathcal{A}$.
For its \textit{length} $n$ we write $|w|$.
The set of all finite words over $\mathcal{A}$, including the \textit{empty word} $\varepsilon$,  together with the operation of concatenation  of words, form the free monoid $\mathcal{A}^*$.
A \textit{morphism} of $\mathcal{A}^*$ is a mapping $\varphi: \mathcal{A}^* \to \mathcal{A}^*$ satisfying $\varphi(wv) = \varphi(w)\varphi(v)$ for all finite words $w,v \in \mathcal{A}^* $.
A morphism is uniquely given by the images $\varphi(a)$ of all letters $a \in \mathcal{A}$.
If, moreover, there exists a letter $b \in \mathcal{A}$ and a non-empty word $w \in \mathcal{A}^*$ such that $\varphi(b) = b w$, then the morphism $\varphi$ is called a \textit{substitution}.

If a word $w\in\mathcal{A}^*$  can be written as a concatenation $w= uvz$, then $v$ is called a \textit{factor} of $w$.
If $u$ is the empty word, then $v$ is called a \textit{prefix} of $w$; if $z$ is the empty word, then $v$ is called a \textit{suffix} of $w$.
If $\varphi$  is a substitution, then for every $n\in \mathbb{N}$ the word $\varphi^n(b)$ is a prefix of $\varphi^{n+1}(b)$.

An \textit{infinite word} ${\bf u}$ over  $\mathcal{A}$ is a sequence $u_0u_1u_2\cdots  \in  \mathcal{A}^\mathbb{N}$. 
The set of all factors  of ${\bf u}$ is denoted by $\mathcal{L}({\bf u})$ and
referred to as the \textit{language of ${\bf u}$}.
The action of a morphism $\varphi: \mathcal{A}^*\to \mathcal{A}^*$ can be naturally
extended to $\mathcal{A}^\mathbb{N}$  by $\varphi({\bf u}) =
\varphi(u_0u_1u_2\cdots )
=\varphi(u_0)\varphi(u_1)\varphi(u_2)\cdots $. If  $\varphi({\bf
u}) = {\bf u}$ for some infinite word ${\bf u}$, then the  word ${\bf
u}$ is a \textit{fixed point} of the morphism $\varphi$. Every
substitution $\varphi$ has a  fixed point, namely  the infinite
word which has prefix $\varphi^n(b)$ for every $n$; this infinite
word is denoted $ \varphi^\infty(b)$.

\begin{example}\label{exampleThueMorse} The Thue--Morse word  ${\bf
u}_{TM}$ is a fixed point of the substitution $\varphi_{TM}$ which
maps $0 \mapsto \varphi_{TM}(0)=01$  and $1 \mapsto
\varphi_{TM}(1)=10$. The substitution has two fixed points: the
Thue--Morse word ${\bf u}_{TM}= \varphi^\infty(0)$ and the word $\varphi^\infty(1)$.
\end{example}

When manipulating a fixed point ${\bf u}$ of a  substitution
$\varphi$, we will need the notion of an ancestor: We say that a word $w=
w_0w_1 \ldots w_k$ is  a {\it $\varphi$-ancestor}  of a word $v
\in \mathcal{L}({\bf u})$ if the following three conditions are satisfied:
 \begin{itemize}
   \item $v$ is a
factor of $\varphi(w_0w_1 \ldots w_k)$,
 \item  $v$ is not a factor
of $\varphi(w_1 \ldots w_k)$,
 \item $v$ is not a factor of
$\varphi(w_0w_1 \ldots w_{k-1})$.
 \end{itemize}
\begin{example}\label{exampleThueMorse}  Consider the Thue--Morse word  $${\bf
u}_{TM}=
 01101001100101101001011001\cdots =
\varphi_{TM}(0)\varphi_{TM}(1)\varphi_{TM}(1)\varphi_{TM}(0)\varphi_{TM}(1)\varphi_{TM}(0)\cdots
$$
The factor $v=010011$ has an ancestor $w = 1101$, since $v$ is a
factor of $\varphi_{TM}(1101) = 10100110$, and $w$ is neither a factor
of $\varphi_{TM}(101) = 100110$ nor a factor of $\varphi_{TM}(110) =
101001$. In fact, $w$ is the unique ancestor of $010011$.

The factor $v=010$ is a factor of $\varphi(11) =1010$ and a factor
of $\varphi(00)=0101$. Thus $v=010$ has two ancestors, namely $11$
and $00$.
\end{example}
Let us now define the key notion of this article. The mapping $\Psi:
\mathcal{A}^* \mapsto \mathcal{A}^*$ satisfying
$$\Psi (uv) = \Psi(v)  \Psi(u)\ \ \ \hbox{for any}  \ \ u,v \in
\mathcal{A}^*\ \quad  \hbox{and} \quad   \Psi^2  \ \ \hbox{equal to  the
identity}\,$$ is an \textit{involutory antimorphism}. Any antimorphism
$\Psi$ is determined by the images of the letters from
$\mathcal{A}$. 
The restriction of $\Psi$ to the alphabet $\mathcal{A}$
is a permutation  $\pi$ on $\mathcal{A}$ with  cycles of    length 1 or 2  only.

A word $w \in \mathcal{A}^*$ is a \textit{$\Psi$-palindrome} if
$\Psi(w)=w$.
The notion \textit{pseudopalindrome} is also used.

A \textit{$\Psi$-palindromic closure} of a factor $w \in \mathcal{A}^*$ is
the shortest $\Psi$-palindrome having $w$ as  a prefix. The
$\Psi$-palindromic closure of $w$ is denoted by $w^\Psi$.
If $w = uq$ such that $q$ is the longest $\Psi$-palindromic suffix of
$w$, then
$$ w^\Psi = uq\Psi(u).$$

\begin{example}\label{example1}  There are
two distinct involutory antimorphisms on the alphabet  $\mathcal{A} =\{0,1\}$:   $R$, the mirror image, and 
 $E$,  the antimorphism exchanging letters, i.e.,   $E(0)=1$
and $E(1)=0$. Put
 $ u= 0110110$. Then  $u$ is an $R$-palindrome, and
 thus  $u^R = u$. The word $u$ is not an $E$-palindrome, since
 $E(u)=E(0110110) = 1001001 \neq u$.  The $E$-palindromic closure
 of $u$ is $u^E = 011011001001$ since the longest $E$-palindromic
 suffix of $u$ is $10$.

\end{example}

In \cite{LuLu}, de Luca and De Luca generalized the notion of
standard words considering  the
set $\mathcal{I}$  of all involutory antimorphisms on
$\mathcal{A}^*$ instead of just one  fixed antimorphism. We will denote  by $\mathcal{I}^\mathbb{N}$ the set
of all infinite sequences over $\mathcal{I}$.

\begin{definition}\label{def:GPS}
 Let $\Theta  = \vartheta_1  \vartheta_2 \vartheta_3 \ldots
\in \mathcal{I}^\mathbb{N}$ and $\Delta =\delta_1\delta_2\delta_3
\ldots \in \mathcal{A}^\mathbb{N}$.   Denote
$$w_0=\varepsilon \quad \hbox{and} \quad
w_{n}= \Bigl(w_{n-1} \delta_{n}\Bigr)^{\vartheta_n} \quad
\hbox{for any }\ n \in \mathbb{N}, n\geq 1.$$  The word
$${\bf
u}_\Theta(\Delta) = \lim\limits_{n \to \infty}w_n$$   is called a
{\it generalized pseudostandard word} with the directive sequence of
letters $\Delta$ and the directive sequence of antimorphisms $\Theta$.
\end{definition}
Let us stress that the definition  of  ${\bf u}_\Theta(\Delta)$ is
correct as  $w_n$ is a prefix of $w_{n+1}$ for any $n$.

\begin{example}\label{example2}   Consider  the
directive sequence of letters  $\Delta = 0(101)^\omega$ and the
directive sequence of  antimorphisms $\Theta = (RE)^\omega$. Then

$w_0 = \varepsilon$

$w_1 = 0^R = 0$

$w_2 = (01)^E = 01$

$w_3 = (010)^R = 010$

$w_4 = (0101)^E = 0101$

$w_5 = (01011)^R = 010 11 010$

$w_6 = (010110100)^E = 010 110100 101$

$w_7= (0101101001011)^R = 010 1101001011 010$

$w_8= (0101101001011 0101)^E = 0101101001011 0101 0010110100101$

$\quad \vdots$

\end{example}

The authors of \cite{LuLu} proved that  the famous Thue--Morse word ${\bf u}_{TM}$ is a generalized pseudostandard word with directive sequences

 $$  \Delta = 01^\omega
 \quad \hbox{and} \quad \Theta = (ER)^\omega.$$

 \subsection{The Generalized Thue--Morse words and their properties}\label{T-M}

 In Introduction, we defined a generalized Thue--Morse word by \eqref{DefThueMorse}, i.e.,
its $n$-th letter is the digit sum of the expansion of $n$ in base $b$ taken modulo $m$.
It can be shown that ${\bf t}_{b,m}$ is a fixed point of the substitution  $\varphi_{b,m}$  over the alphabet $\Z_m$:

\begin{equation}\label{substitution}\varphi(k) = \varphi_{b,m}(k) =  k(k+1)(k+2)\ldots (k+b-1)\quad \hbox{for every}\  k  \in
\mathbb{Z}_m\end{equation} where letters are expressed modulo
$m$. As already stated in \cite{AlSh}, ${\bf t}_{b,m}$  is
periodic if and only if $b = 1 \pmod m$.

\noindent (Note about our subsequent notation: When dealing with letters from $\Z_m$, we will consider all operations modulo $m$. 
We will denote the relation $x = y \pmod m$ by $x =_m y$, to ease the notation.)

The language of ${\bf t}_{b,m}$  has many symmetries: 
denote  by  $I_2(m)$ the  group generated by antimorphisms
$\Psi_x$ defined for every  $x \in \mathbb{Z}_m$ by 
\begin{equation}\label{antimorphism}
\Psi_x(k) = x-k\quad  \hbox{for every } \ k\in \mathbb{Z}_m\,.
\end{equation}
This group  - usually called the {\it dihedral group of order $2m$}  -
contains $m$ morphisms and $m$ antimorphisms. As shown in
\cite{Sta2011},  if  $w$ is a factor of ${\bf t}_{b,m}$, then
$\nu(w)$ is a factor of ${\bf t}_{b,m}$ for every element $\nu$ of
the group  $I_2(m)$.
\vskip 0.3cm

Let us list  some properties of the generalized Thue--Morse word we will use later. They are not hard to  observe.
 (See also \cite {Sta2011}.) \\[1mm]

\noindent {\bf Properties } of ${\bf t}_{b,m}$

\begin{enumerate} 
\item \label{prop1} Let  $b \neq_m 1$. If $v = v_0v_1 \cdots v_{k-1}$  is a factor of  ${\bf
t}_{b,m}$ of length $k \geq 2b+1$, then there exists $j\in \{0,1,
\ldots,k-2\}$ such that $v_{j}+1 \neq_m v_{j+1}$.  Such index $j$
will be called {\it jump} in $v$. It is important to note here that we always start
indices from $0$. Sometimes, when no confusion can occur,
we will say that there is a jump between the letters $v_j$ and
$v_{j+1}$.

\item If $b \neq_m 1$, then a factor $v $ of length at least
$2b+1$  has uniquely determined $\varphi$-ancestors.

\item If $v = v_0v_1 \ldots v_k$   is a $\Psi$-palindrome and an
index $j$ is a jump in $v$,  then also the index $k-j$ is a jump in
$v$.

\item\label{komutace1} $\Psi_x\varphi =  \varphi \Psi_{x-b+1}$ for every $x \in \mathbb{Z}_m$.

\item\label{komutace2} For every $\Psi  \in I_2(m)$ there exists a unique $\Psi' \in I_2(m)$ such that $\Psi \varphi = \varphi
\Psi'$.

\item If $w\neq \varepsilon$ is a $\Psi$-palindrome for some $\Psi
\in I_2(m)$, then for every antimorphism $\Psi' \in I_2(m)$ such that $\Psi'
\neq \Psi$ we have $\Psi'(w)\neq w$.

\item If $\Psi\neq \Psi'$, then for every letter $a \in \mathcal{A
}$,  $\Psi(a)\neq  \Psi'(a)$.
\end{enumerate}

\section{Proof of Theorem  \ref{main}}

Proof of Theorem \ref{main} will be split into  Propositions
\ref{JdeTo} and \ref{NejdeTo}.

\begin{prop}\label{JdeTo} Let $m,b \in \mathbb{Z}$.
Denote \begin{equation}\label{directive}\Delta =
0\Bigl(12\ldots(b-1)\Bigr)^\omega \in \mathbb{Z}_m^\mathbb{N}
\quad \hbox{and} \quad \Theta= \Bigl(\Psi_0\Psi_1 \ldots
\Psi_{m-1}\Bigr)^\omega\,
 \in I_2(m)^\mathbb{N}\,.
 \end{equation}
If $b\leq m$ or $b =1 \pmod m$, then the generalized
pseudostandard word ${\bf u}_\Theta(\Delta) $  with directive
sequences $\Delta$ and $\Theta$ equals ${\bf t}_{b,m }$.

\end{prop}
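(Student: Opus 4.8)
The plan is to treat the two hypotheses separately, since they produce structurally different words. If $b-1 =_m 0$, then ${\bf t}_{b,m}$ is periodic, and a short computation (using $s_b(n) =_m n$) gives ${\bf t}_{b,m} = \left(0\,1\,2\cdots(m-1)\right)^\omega$. In this case I would argue directly: the directive data satisfy $\delta_n =_m n-1$ and $\vartheta_n = \Psi_{(n-1)\bmod m}$, and the defining arithmetic of $\Psi_x$ shows that the length-$n$ prefix of $\left(0\,1\cdots(m-1)\right)^\omega$ is already a $\vartheta_n$-palindrome. Hence every closure is trivial, $w_n$ is exactly this prefix, and $\lim w_n = {\bf t}_{b,m}$.

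The substantive case is $b\le m$ (note that $2\le b\le m$ forces $b\neq_m 1$, so ${\bf t}_{b,m}$ is aperiodic and the jump property is available). Here the heart of the argument is the renormalization identity
\[
 w_{n+b-1} = \varphi(w_n)\qquad\text{for every } n\ge 1,
\]
which I would prove by induction on $n$. The base case is the first $b$ steps: since $b\le m$ the letters $0,1,\ldots,b-1$ are distinct and $\delta_i = i-1$, $\vartheta_i=\Psi_{i-1}$ for $1\le i\le b$; because $\Psi_{i-1}(0\,1\cdots(i-1)) = 0\,1\cdots(i-1)$, each prefix is already a $\vartheta_i$-palindrome, no closure occurs, and we reach $w_b = 0\,1\cdots(b-1) = \varphi(0) = \varphi(w_1)$.

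For the inductive step, assume $w_{n+b-1}=\varphi(w_n)$ and write $w_{n+1} = (w_n\delta_{n+1})^{\Psi_y}$ with $\vartheta_{n+1}=\Psi_y$. The periodicities of $\Delta$ and $\Theta$ give $\delta_{n+b}=\delta_{n+1}$ and $\vartheta_{n+b}=\Psi_{y+b-1}$, so $w_{n+b} = \bigl(\varphi(w_n)\,\delta_{n+b}\bigr)^{\Psi_{y+b-1}}$. The commutation relation $\Psi_x\varphi = \varphi\Psi_{x-b+1}$ (Property~\ref{komutace1}) shows that the $\varphi$-image of a $\Psi_y$-palindrome is a $\Psi_{y+b-1}$-palindrome; hence $\varphi(w_{n+1})$ is a $\Psi_{y+b-1}$-palindrome, and since ${\bf t}_{b,m}$ is a fixed point of $\varphi$ one checks that $\varphi(w_n)\delta_{n+b}$ is a prefix of it. Thus $\varphi(w_{n+1})$ is a $\Psi_{y+b-1}$-palindrome extending $\varphi(w_n)\delta_{n+b}$, so $w_{n+b}$ is a prefix of $\varphi(w_{n+1})$; the identity follows once I show that $\varphi(w_{n+1})$ is the \emph{shortest} such extension.

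This last point is the main obstacle. Equivalently, I must show that the longest $\Psi_{y+b-1}$-palindromic suffix of $\varphi(w_n)\delta_{n+b}$ is exactly $\varphi(q)$ with its first and last $b-1$ letters deleted, where $q$ is the longest $\Psi_y$-palindromic suffix of $w_n\delta_{n+1}$. One inclusion is immediate, since trimming a $\Psi_{y+b-1}$-palindrome symmetrically keeps it a palindrome. For maximality I would argue by contradiction: a strictly longer palindromic suffix is a long factor of ${\bf t}_{b,m}$, hence has a uniquely determined $\varphi$-ancestor, and because the jumps of a pseudopalindrome occur in symmetric pairs (Property~\ref{prop1} together with the symmetry of jumps), the factor decomposes into complete $\varphi$-blocks bounded by jumps. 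Pulling this decomposition back through $\varphi$ via Property~\ref{komutace1} would yield a $\Psi_y$-palindromic suffix of $w_n\delta_{n+1}$ strictly longer than $q$, contradicting the maximality of $q$. Granting the identity, the subsequence $w_{1+k(b-1)} = \varphi^k(0)$ converges to $\varphi^\infty(0)={\bf t}_{b,m}$, and since $(w_n)$ is nested this is also $\lim w_n$, completing the proof.
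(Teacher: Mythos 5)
Your global architecture is sound, and it is in fact a tidier packaging of what the paper does: the periodic case is handled identically, and in the aperiodic case the paper proves, by induction on $n$, the explicit closure formulas of Lemma \ref{lemmaVic}, Claim \ref{claim2} and Lemma \ref{konecN1}, which together amount to exactly your renormalization identity $w_{n+b-1}=\varphi(w_n)$. The problem is your maximality step, and the gap is genuine, not cosmetic. A first warning sign: your inductive step never invokes the hypothesis $b\le m$, yet the identity it claims is false when $b>m$. For ${\bf t}_{4,2}$ (Examples \ref{example2} and \ref{example3}) one has $w_7=\varphi(w_4)=\varphi^2(0)$, but $|w_8|=30\neq 32=|\varphi(w_5)|$, so $w_8\neq\varphi(w_5)$. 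Whatever proves the inductive step must therefore use $b\le m$ somewhere, and your sketch does not.

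Here is precisely where it breaks. Your contradiction argument pulls a hypothetical longer $\Psi_{y+b-1}$-palindromic suffix $p$ back through $\varphi$ via unique ancestors and jump symmetry. That machinery needs $p$ to be long: unique ancestors require $|p|\ge 2b+1$ (Property 2), and the decomposition $p=\Psi(\delta)\varphi(w)\delta$ of Lemma \ref{pomocne} requires $p$ to end with a full block $\varphi(a_1a_2)a_3$. A palindromic suffix with $3\le |p|\le 2b$ escapes this argument entirely, and such suffixes really occur: if the last letter $a$ of $w_n$ satisfies $a+b=_m 1$, then the tail of $\varphi(w_n)\delta_{n+b}$ is an arithmetic progression $a(a+1)\cdots(a+b-1)\delta_{n+b}$ containing no jump, and (for $\delta_{n+b}=1$) the word $(a+k)(a+k+1)\cdots(a+b-1)1$ is a $\Psi_{a+b}$-palindrome for every $k=_m b-1$; when $b>m$ there are solutions $k<b-1$, i.e., palindromic suffixes of length up to $b$ strictly longer than your trimmed $\varphi(q)$. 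Concretely for ${\bf t}_{4,2}$: $w_7\delta_8=\varphi^2(0)\cdot 1$ ends in the $E$-palindrome $0101$ of length $4$, while your formula predicts a longest palindromic suffix of length $2$; this is exactly what makes $w_8$ fail to be a prefix of ${\bf t}_{4,2}$. Ruling out these short wrap-around suffixes is precisely the role of $b\le m$ (the congruence $k=_m b-1$ then forces $k=b-1$), and it is the content of parts 2 and 3 of Lemma \ref{konec1} and of Claim \ref{claim2} in the paper; your proof needs this jump-count case analysis for short suffixes grafted onto the maximality step, since the ancestor argument cannot deliver it. (A smaller issue: minimality of the closure gives only $|w_{n+b}|\le|\varphi(w_{n+1})|$, not that $w_{n+b}$ is a prefix of $\varphi(w_{n+1})$ --- e.g.\ $aba$ is not a prefix of the $R$-palindrome $abba$ --- but since a $\Psi$-palindrome of length at most $2|u|$ with prefix $u$ is determined by its length, your reduction to ``shortest'' does repair this.)
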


Axel Thue found the classical Thue--Morse word  ${\bf t}_{2,2}$
when he searched for infinite words without overlapping factors, i.e.,
words without factors of the form $v = ws = pw$ such that $|w| > |s|$.
The authors of \cite{MaBrGlLa}  showed that  the generalized
Thue--Morse word  ${\bf t}_{b,m }$ is overlap-free if and only
if $b\leq m$. It is worth to mention that the same condition
appears in our characterization of non-periodic words ${\bf t}_{b,m }$ which are the
generalized pseudostandard words.

For parameters $b=m=2$,  Proposition \ref{JdeTo}  was shown in
\cite{LuLu}.   The following example illustrates that the
assumption $b\leq m$ is crucial for validity of Proposition
\ref{JdeTo}.

\begin{example}\label{example3}
    Consider  $b=4$ and $m=2$. On the alphabet $\mathcal{A} = \mathbb{Z}_2$, we have $\Psi_0(k) =
0-k = k$ and $\Psi_1(k) = 1-k$ for any letter $k$. In the notation
of Example \ref{example1},  it means $\Psi_0 = R$ and $\Psi_1 =
E$. Therefore the   sequences  $\Delta$ and  $\Theta$ from
Proposition \ref{JdeTo}  coincide with sequences $\Delta$ and
$\Theta$ from Example \ref{example2}. The generalized Thue--Morse  word ${\bf
t}_{4,2}$ starts as $${\bf t}_{4,2} =
01011010010110101010010110100101010110100101 \ldots $$ Note that,
using the notation from Definition \ref{def:GPS}, $w_8$ is not a
prefix of ${\bf t}_{4,2}$ and thus the generalized pseudostandard word
${\bf u}_\Theta(\Delta)$ from Proposition \ref{JdeTo} does not correspond to ${\bf
t}_{4,2}$.
\end{example}

\medskip

Propositions \ref{JdeTo} and \ref{NejdeTo} rely on several technical lemmas. The first one
settles the case for  periodic Thue--Morse words.

\begin{lemma}
Let $b =_m 1$. The word ${\bf u}_\Theta(\Delta)$ with the
directive sequences   $\Delta$ and  $\Theta$  given in
\eqref{directive} equals ${\bf t}_{b,m}$.
\end{lemma}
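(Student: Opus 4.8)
The plan is to prove that when $b \equiv_m 1$, the word ${\bf t}_{b,m}$ is periodic and to exhibit it explicitly, then verify directly that the palindromic-closure construction with the directive sequences from \eqref{directive} reproduces it. First I would observe that the hypothesis $b =_m 1$ makes the substitution $\varphi_{b,m}$ collapse in a very controlled way: since $\varphi(k) = k(k+1)\cdots(k+b-1)$ and the exponents run over a full system of residues repeated $\frac{b-1}{m}$ times plus the leftover, the image of each letter $k$ is (up to the length $b$) a fixed cyclic block whose starting letter is $k$. I expect that ${\bf t}_{b,m}$ is simply the purely periodic word $\bigl(0\,1\,2\cdots(m-1)\bigr)^\omega$ in this case, which one checks immediately from the closed form $s_b(n) \bmod m$: because $b \equiv_m 1$, every base-$b$ digit contributes $1$ to $s_b(n)\bmod m$ per unit, so $s_b(n) =_m n$ and hence ${\bf t}_{b,m}= (n \bmod m)_{n=0}^\infty = (012\cdots(m-1))^\omega$.

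Having identified the target word, the core of the proof is an induction showing that the prefixes $w_n$ produced by Definition \ref{def:GPS} are exactly the prefixes of $(012\cdots(m-1))^\omega$. I would track two things through the induction: the current prefix $w_n$ and which antimorphism $\vartheta_n = \Psi_{(n-1)\bmod m}$ is about to act. The key computation is to determine, at each step, the longest $\vartheta_n$-palindromic suffix of $w_{n-1}\delta_n$, since the closure formula $w^\Psi = uq\Psi(u)$ depends on it. The directive letters $\Delta = 0(12\cdots(b-1))^\omega$ and the indices of $\Theta$ are synchronised precisely so that appending $\delta_n$ to the periodic prefix and closing under $\Psi_{(n-1)\bmod m}$ extends the prefix by exactly one period-consistent letter (or a controlled block), keeping $w_n$ inside the periodic word. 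I would verify the base cases $w_0,w_1,\dots$ by hand up to one full period and then argue that the periodicity of both directive sequences, combined with Property \ref{komutace1} ($\Psi_x\varphi = \varphi\Psi_{x-b+1}$ and its consequence $\Psi_x = \Psi_{x}$ when $b=_m 1$, so the antimorphisms commute with the period shift), makes the inductive step repeat verbatim with all indices shifted by the period.

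The step I expect to be the main obstacle is pinning down the longest $\vartheta_n$-palindromic suffix at each stage and proving it behaves uniformly. In a periodic word the $\Psi_x$-palindromic suffixes can be long and their lengths can oscillate, so I would need a clean lemma stating that for the prefix $p_\ell = (012\cdots(m-1))$-prefix of length $\ell$, the word $p_\ell\,\delta$ has a $\Psi_x$-palindromic suffix of a predictable length whenever $x$ and $\delta$ are the synchronised values dictated by $\Delta,\Theta$. Concretely, I would show that $\Psi_x$ fixes the block $012\cdots(m-1)$ read appropriately (using $\Psi_x(k)=x-k$ and choosing $x$ so that $x-k$ runs back through the same residues), so that the relevant suffix is a whole number of periods; this reduces the closure operation to appending one more synchronised letter. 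Once this palindromic-suffix lemma is in place, the induction closes and the limit $\lim_n w_n$ is forced to equal $(012\cdots(m-1))^\omega = {\bf t}_{b,m}$, completing the proof.
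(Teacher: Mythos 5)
Your proposal is correct and follows essentially the same route as the paper: first identify ${\bf t}_{b,m} = \bigl(012\cdots(m-1)\bigr)^\omega$ from $s_b(n) =_m n$, note that $\Delta$ coincides with this periodic word, and then induct on the prefixes $w_n$. The only difference is that the obstacle you anticipate (controlling the longest $\vartheta_n$-palindromic suffix) dissolves entirely: since $\Psi_n(k) = n-k$, the whole word $w_n\delta_{n+1} = 012\cdots n$ is itself a $\Psi_n$-palindrome, so each closure is trivial and $w_{n+1} = 012\cdots n$ with no suffix analysis needed.
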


\begin{proof}

Let $n=\sum_{i=0}^ka_ib^i$  be the expansion of the number $n$ in
the base $b$. The assumption $b =_m 1$ implies  $b^i =_m 1$ for
any $i \in \mathbb{N}$. With respect to \eqref{DefThueMorse}, we
can write

$$ {\bf
t}_{b,m}(n) =_m s_b(n) = \sum_{i=0}^ka_i  =_m \sum_{i=0}^ka_ib^i =
n\,.
$$
Since  $\Delta = 0\Bigl(1\cdots(b-1)\Bigr)^\omega$ equals $(01
\cdots (m-1))^\omega$, we have showed that  ${\bf t}_{b,m} =
\Delta$.  Moreover, the sequence of antimorphisms  $\Theta$ can be
indexed by natural numbers as $\Theta = \Psi_0\Psi_1\Psi_2\Psi_3
\cdots $ where $\Psi_n = \Psi_x$ for $n=_m x$. Clearly by
\eqref{antimorphism}
 \begin{equation}\label{periodic}
\Psi_n(012\ldots n) = 012 \ldots n = (012 \ldots n)^{\Psi_n}\,.
 \end{equation}

Let the words $w_n$ have the meaning as in Definition
\ref{def:GPS}. We will show  by induction that  $w_{n+1} =
0123\cdots n$ for any $n \in \mathbb{N}$. We have $w_1 =
(0)^{\Psi_0} = 0 = \Psi_0(0)$. Using  definition of $w_{n+1}$ and
\eqref{periodic} we get
$$w_{n+1} = \bigl(w_n\delta_{n+1}\bigr)^{\Psi_{n}} = \Bigl(\bigl(012\cdots
(n-1)\bigr) n\Bigr)^{\Psi_{n}}  = 012\cdots (n-1)n\,.$$ This means
that ${\bf t}_{b,m} = \lim\limits_{n\to \infty} w_n$, as desired.
\end{proof}

We can now concentrate on the non-periodic Thue--Morse words, i.e., on the case $b \ne_m 1$, which will be treated using several lemmas.

\begin{lemma}\label{pomocne}    If  $\Psi
\in I_2(m)$ is an antimorphism   and  $p\in  \mathcal{A}^*$ is a
$\Psi$-palindrome such that $\varphi(a_1a_2)a_3$ is a suffix of
$p$ for some letters $a_1,a_2,a_3 \in  \mathcal{A}$,  then there
exists a word $w$ of length at least $2$ and antimorphism $\Psi'
\in I_2(m)$ such that
$$p = \Psi(a_3)\varphi(w)a_3\,, \quad \Psi'(w)=w \quad \hbox{and}
\quad \Psi \varphi = \varphi \Psi'.$$
\end{lemma}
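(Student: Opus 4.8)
The plan is to reduce the claim to a statement about how $p$ decomposes into complete $\varphi$-images with a single ``overhang'' letter at each end, and then to read off the two remaining assertions. Throughout I use that $p$ is a factor of ${\bf t}_{b,m}$ with $b\neq_m 1$ (as it is in every application of the lemma), so that the listed properties of ${\bf t}_{b,m}$ apply; in particular $|p|\geq 2b+1$. First I fix $\Psi'$ by the property that for every $\Psi\in I_2(m)$ there is a unique $\Psi'\in I_2(m)$ with $\Psi\varphi=\varphi\Psi'$ (concretely $\Psi'=\Psi_{x-b+1}$ when $\Psi=\Psi_x$); this makes the last equality in the conclusion automatic. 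Since $p$ is a $\Psi$-palindrome, applying $\Psi$ to the suffix $\varphi(a_1a_2)a_3$ produces a prefix of $p$, and using that $\Psi$ is an antimorphism with $\Psi^2=\mathrm{id}$ together with $\Psi\varphi=\varphi\Psi'$ I compute $\Psi(\varphi(a_1a_2)a_3)=\Psi(a_3)\,\varphi(\Psi'(a_2)\Psi'(a_1))$. Hence $p$ begins with the single letter $\Psi(a_3)$ followed by the two complete blocks $\varphi(\Psi'(a_2))\varphi(\Psi'(a_1))$, and ends with the two complete blocks $\varphi(a_1)\varphi(a_2)$ followed by the single letter $a_3$. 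The goal is thereby reduced to showing that, after deleting the first letter $\Psi(a_3)$ and the last letter $a_3$, the remaining word $q:=p_1\cdots p_{|p|-2}$ is exactly a concatenation of complete $\varphi$-images, i.e.\ $q=\varphi(w)$ for some $w$.

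The crux, and the step I expect to be the main obstacle, is establishing this block alignment of $q$. The mechanism I would use is recognizability: since $|p|\geq 2b+1$, the unique-$\varphi$-ancestor property determines the block grid of $p$ uniquely, so its grid boundaries sit in a single residue class modulo $b$. The suffix $\varphi(a_1a_2)a_3$ exhibits this grid near the right end and the prefix $\Psi(a_3)\varphi(\Psi'(a_2)\Psi'(a_1))$ exhibits it near the left end; because $p$ is a $\Psi$-palindrome, the jump-symmetry property forces the grid read from the two ends to be mirror images of one another. Matching the two descriptions pins the offset so that $\Psi(a_3)$ and $a_3$ are exactly the lone boundary overhangs, whence $|p|\equiv 2\pmod b$ and $q$ is a whole number of complete blocks, $q=\varphi(w)$. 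The delicate point is to rule out a ``shifted'' grid in which $\varphi(a_1)\varphi(a_2)$ or $\varphi(\Psi'(a_2))\varphi(\Psi'(a_1))$ straddle a hidden (jump-free) boundary $z_i+b=_m z_{i+1}$ of the $\varphi$-ancestor $z=z_0\cdots z_k$ of $p$; this is where the finer jump distribution of ${\bf t}_{b,m}$ (every factor of length at least $2b+1$ contains a jump, and jumps occur only at grid boundaries) must be invoked, and where an even value of $b$ needs extra care, since the congruence $|p|\equiv 2\pmod b$ only determines the offset up to $b/2$.

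Granting the decomposition $p=\Psi(a_3)\varphi(w)a_3$, the two remaining assertions are immediate. The interior $q=\varphi(w)$ contains the two leading blocks $\varphi(\Psi'(a_2))\varphi(\Psi'(a_1))$, so $|q|\geq 2b$ and $|w|=|q|/b\geq 2$. Finally, to obtain $\Psi'(w)=w$ I apply $\Psi$ to $p=\Psi(a_3)\varphi(w)a_3$: since $\Psi$ is an antimorphism with $\Psi^2=\mathrm{id}$ and $\Psi\varphi=\varphi\Psi'$, this gives $\Psi(p)=\Psi(a_3)\varphi(\Psi'(w))a_3$, and $\Psi(p)=p$ forces $\varphi(w)=\varphi(\Psi'(w))$. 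As $\varphi$ is injective (reading off the first letters of the blocks recovers its argument), I conclude $w=\Psi'(w)$, which finishes the proof.
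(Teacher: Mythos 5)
Your opening and closing steps coincide with the paper's own proof: you fix $\Psi'$ via Property 5, you compute $\Psi\bigl(\varphi(a_1a_2)a_3\bigr)=\Psi(a_3)\varphi\bigl(\Psi'(a_2)\Psi'(a_1)\bigr)$, and, once $p=\Psi(a_3)\varphi(w)a_3$ is granted, you obtain $\Psi'(w)=w$ from $\Psi(p)=p$ and injectivity of $\varphi$ exactly as the paper does; you are also right that the lemma tacitly assumes $p\in\mathcal{L}({\bf t}_{b,m})$ and $b\neq_m 1$. However, the decomposition $p=\Psi(a_3)\varphi(w)a_3$ itself --- which you correctly call the crux and which is the entire content of the lemma --- is never established in your write-up: you sketch a mechanism, then concede that the ``shifted grid'' scenario still has to be excluded ``where the finer jump distribution \dots must be invoked'' and that even $b$ ``needs extra care'', and you resolve neither point. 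That is a genuine gap, not a proof. The danger you flag is real, so it cannot be waved away: as a pure word identity the prefix/suffix information does not force alignment. For instance, over $\Z_2$ with $b=2$ the $R$-palindrome $01010=\varphi(00)0$ has the required suffix form, yet admits no decomposition $0\,\varphi(w)\,0$ (the middle would have odd length); it is excluded only because $01010$ is not a factor of ${\bf t}_{2,2}$. So a property of the language must genuinely enter at this point.

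Moreover, the mechanism you sketch is the wrong way around, which is precisely why you run into the parity ambiguity: matching the mirror-image grids at the two ends only yields a congruence of the shape $2c\equiv|p|-2\pmod b$ for the offset $c$, which has two solutions when $b$ is even. The paper pins the grid in one stroke, with no case distinction. By Property 1, the suffix $\varphi(a_1a_2)a_3$, having length $2b+1$, contains a jump; since consecutive letters inside any block $\varphi(a)$ differ by exactly $1$, that jump can only sit at one of the two internal boundaries of the suffix, i.e.\ at position $n-1$ or $n-b-1$ of $p=p_0\cdots p_n$ --- and these two positions are congruent modulo $b$. Since jumps of ${\bf t}_{b,m}$ occur only at positions $\equiv -1\pmod b$ of the fixed point, the grid boundaries inside $p$ are thereby forced to lie at positions $\equiv n-1\pmod b$, outright. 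Property 3 then mirrors this jump to index $0$ or $b$, so grid boundaries also lie at positions $\equiv 0\pmod b$; combining the two congruences gives $n\equiv 1\pmod b$, the interior $p_1\cdots p_{n-1}$ is an exact concatenation of complete blocks whose last two are $\varphi(a_1)\varphi(a_2)$, and $p=\Psi(a_3)\varphi(w)a_3$ with $|w|=(n-1)/b\geq 2$. This short jump argument is exactly the step missing from your outline; with it inserted, the rest of your write-up goes through and agrees with the paper's proof.
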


\begin{proof}  Let $p=p_0p_1 \ldots p_{n}$.
Since $p$ has a suffix $\varphi(a_1a_2)a_3$ of length $2b+1$, according to Property \ref{prop1}, $p$ has a jump position.
The jump position of $p$ is either $n-1$ or $n-b-1$.
As $p$ is  a $\Psi$-palindrome, the index $0$ or $b$ is a jump of $p$.
It implies that a prefix of $p$ is of the form $a_3'\varphi(a_2'a_1')$ for some letters
$a_1',a_2',a_3'$. Thus $p =  \Psi(a_3)\varphi(w)a_3$ for some word
$w$ with $|w|\geq 2$.
As $p = \Psi(p)$, using Property \ref{komutace2}, we get $\varphi(w) =
\Psi(\varphi(w)) = \varphi \Psi'(w)$. Since $\varphi$ is injective,
we have $w =\Psi'(w)$.
\end{proof}

\begin{lemma}\label{lemmaVic}  Fix  $n\in \mathbb{N}$ and   $k \in \{2,\ldots, b-1\}$.  Put $\Psi = \Psi_{(b-1)n+k}$.  The longest $\Psi$-palindromic
 suffix  of the factor
$$
v = \varphi^n(0)\varphi^n(1) \varphi^n(2)\ldots \varphi^n(k-1)\,k
$$ is \  $\Psi(k)\varphi^n(1)\varphi^n(2) \ldots \varphi^n(k-1)\,k
\,$   and thus
\begin{equation}\label{suffixKN}
v^{\Psi} = \varphi^n(0)\varphi^n(1) \varphi^n(2)\ldots
\varphi^n(k-1)\varphi^n(k)\,.
\end{equation}
\end{lemma}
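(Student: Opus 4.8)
The plan is to prove the two assertions separately: that $q:=\Psi(k)\varphi^n(1)\varphi^n(2)\cdots\varphi^n(k-1)\,k$ is a $\Psi$-palindromic suffix of $v$, and that it is the \emph{longest} such suffix; equation~\eqref{suffixKN} will then drop out of the closure identity $v^{\Psi}=uq\Psi(u)$.

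I would start from the commutation relation. Iterating Property~\ref{komutace1} gives $\Psi_x\varphi^n=\varphi^n\Psi_{x-n(b-1)}$, so for $x=(b-1)n+k$ one obtains $\Psi\varphi^n=\varphi^n\Psi_k$, i.e.\ $\Psi(\varphi^n(i))=\varphi^n(k-i)$ for every letter $i$. Two boundary letters are then recorded: the last letter of $\varphi^n(0)$ equals $n(b-1)=\Psi(k)$, and the first letter of $\varphi^n(k)$ equals $k$. Writing $u$ for the prefix of $v$ of length $b^n-1$, these facts show $v=uq$, so $q$ really is a suffix of $v$, while $\Psi(q)=q$ is a one-line reversal computation using $\Psi(\varphi^n(i))=\varphi^n(k-i)$ and $\Psi^2=\mathrm{id}$. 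Once maximality is known, $\Psi(\varphi^n(0))=\varphi^n(k)$ together with the antimorphism property shows that $\Psi(u)$ is $\varphi^n(k)$ with its first letter removed, and then $uq\Psi(u)$ telescopes to $\varphi^n(0)\varphi^n(1)\cdots\varphi^n(k)$, which is exactly \eqref{suffixKN}.

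The real work is maximality, which I would handle by induction on $n$. For $n=0$ the statement is degenerate: $v=01\cdots k=q$ is itself a $\Psi_k$-palindrome, so there is nothing longer. For the step, let $s$ be the longest $\Psi$-palindromic suffix of $v$; since $q$ is already a $\Psi$-palindromic suffix, $|s|\geq|q|=(k-1)b^n+2$, and this exceeds $2b+1$ in every case except the boundary ones $n=0$ and $(n,k)=(1,2)$. As $v$ ends in $\varphi(a_1a_2)\,k$ for suitable letters $a_1,a_2$ (take the last two complete $\varphi$-blocks, which exist because $k\geq2$), Lemma~\ref{pomocne} applies to $s$ whenever $|s|\geq2b+1$ and yields $s=\Psi(k)\varphi(w)k$ with $\Psi'(w)=w$, where $\Psi'=\Psi_{(b-1)(n-1)+k}$ and $\Psi\varphi=\varphi\Psi'$. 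Since $\varphi(w)$ is a suffix of $\varphi(V')$ with $V'=\varphi^{n-1}(0)\cdots\varphi^{n-1}(k-1)$ and $|\varphi(w)|$ is a multiple of $b$, block alignment and the injectivity of $\varphi$ force $w$ to be a suffix of $V'$; moreover the letter of $V'$ preceding $w$ must be $(n-1)(b-1)=\Psi'(k)$. Hence $\Psi'(k)\,w\,k$ is a $\Psi'$-palindromic suffix of $v'=\varphi^{n-1}(0)\cdots\varphi^{n-1}(k-1)\,k$, and the induction hypothesis bounds its length by $(k-1)b^{n-1}+2$. This gives $|w|\leq(k-1)b^{n-1}$ and therefore $|s|=b|w|+2\leq(k-1)b^n+2=|q|$, so $s=q$.

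It remains to dispatch the short-suffix corner $(n,k)=(1,2)$, where $|q|=b+2<2b+1$ and Lemma~\ref{pomocne} cannot be invoked; here a hypothetical longer $\Psi$-palindromic suffix would have length in $\{b+3,\dots,2b\}$. I would argue directly: such a suffix must begin with $\Psi(k)=b-1$, yet its first letter sits at a position $\leq b-2$ inside $\varphi(0)$, where the letter equals that position and is therefore strictly smaller than $b-1$, a contradiction. The main obstacle I anticipate is precisely the bookkeeping in the peeling step: verifying that the suffix $\varphi(w)$ aligns with the block decomposition of $\varphi(V')$ and that the letter preceding $w$ is exactly $\Psi'(k)$, so that the word fed into the induction hypothesis is a genuine $\Psi'$-palindromic suffix of $v'$. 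Getting the boundary values of $n$ and $k$ (where $|q|$ dips below $2b+1$) covered by the direct first-letter argument rather than by the inductive machinery is the other delicate point.
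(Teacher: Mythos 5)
Your proof follows the same route as the paper's: the first half (that $q$ is a $\Psi$-palindromic suffix) is the identical computation via $\Psi\varphi^n=\varphi^n\Psi_k$ and the boundary letters, and your induction on $n$ with Lemma~\ref{pomocne} as the peeling device is the same mechanism as the paper's minimal-counterexample argument, just run in the opposite direction. In two respects you are more careful than the paper: you justify the block alignment showing that $w$ is a suffix of $\varphi^{n-1}(0)\cdots\varphi^{n-1}(k-1)$ preceded exactly by the letter $\Psi'(k)$, and you notice that Lemma~\ref{pomocne} needs the palindromic suffix to have length at least $2b+1$, which is guaranteed in all cases except $(n,k)=(1,2)$ --- a corner over which the paper's proof passes silently.

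However, your disposal of that corner case contains a genuine gap. You argue that a hypothetical longer $\Psi$-palindromic suffix of $\varphi(0)\varphi(1)2$ would begin at a position $j\leq b-2$ inside $\varphi(0)$, ``where the letter equals that position and is therefore strictly smaller than $b-1$''. This treats letters as integers, but they are elements of $\mathbb{Z}_m$: the letter at position $j$ is $j \bmod m$, and the required first letter is $\Psi(2)=_m b-1$. The lemma carries no hypothesis $b\leq m$, and when $b>m$ (hence $b\geq m+2$, since $b\neq_m 1$) positions $j\in\{1,\ldots,b-2\}$ with $j=_m b-1$ do exist, so no contradiction arises. Concretely, for $b=5$, $m=3$ the suffix of length $10$ of $\varphi(0)\varphi(1)2$ begins with the letter $1=\Psi(2)$, and your first-letter test cannot reject it. A fix in the spirit of the paper's own techniques (compare the proofs of Lemma~\ref{konec1} and Lemma~\ref{konecN1}) is to use jumps instead of first letters: since $b\neq_m 1$, the word $v=\varphi(0)\varphi(1)2$ has exactly two jumps, between positions $b-1,b$ and between $2b-1,2b$; a suffix $s$ starting at position $j\leq b-2$ contains both of them, and the reflection symmetry of jumps in a $\Psi$-palindrome (Property 3) would carry the first jump of $s$ to the pair of positions $b,b+1$ of $s$, which lies inside the block $\varphi(1)$ where consecutive letters always differ by one --- a contradiction valid for every $m$. (This argument also excludes the suffix of length $2b+1$, i.e.\ $v$ itself, which your stated range $\{b+3,\ldots,2b\}$ omits without justification.) With this repair, or alternatively under the explicit restriction $b\leq m$ --- the only regime in which the paper ever applies the lemma --- your proof is complete.
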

\begin{proof}  First we show that $u=\Psi(k)\varphi^n(1)\varphi^n(2) \ldots \varphi^n(k-1)\,k
\,$ is a  $\Psi$-palindromic
 suffix  of the factor $v$. It is easy to check that the last letter of $\varphi^n(0)$
is the letter $(b-1)n$. In our notation  $\Psi (k) =  {(b-1)n+k} - k
=(b-1)n$. Therefore $\Psi (k)$ is the last letter of
$\varphi^n(0)$, and thus $u$ is a suffix of $v$. To show that $u$
is a $\Psi$-palindromic suffix, we need to show
$$ \Psi\Bigl(\varphi^n(i)\Bigr) = \varphi^n(k-i) \quad \hbox{for any
}\ i=1,2\ldots, k-1\,.$$ Using Property \ref{komutace1},  we get
$$\Psi\varphi^n = \Psi_{(b-1)n+k}  \varphi^n = \varphi^n \Psi_k,
$$
and thus $ \Psi\Bigl(\varphi^n(i)\Bigr) =\varphi^n \Bigl(\Psi_k
(i)\Bigr) = \varphi^n (k-i)$ for all $i$, including $i=0$ and $i=k$. \\

 Now we show by contradiction that $u$ is
the longest $\Psi$-palindromic
 suffix  of $v$. Consider the minimal $n$ for which  the statement
 is false, i.e.,  the longest  $\Psi$-palindromic
 suffix  of $v$ - denote it by $p$ - is longer than   $u$.
Since  $ 01 \ldots (k-1) k = (01 \ldots (k-1) k) ^\Psi$, the
minimal $n$ is $\geq 1$. As  $|p| > |u| $,  $p$ has  a suffix $u$
and  we can apply   Lemma \ref{pomocne}.  Therefore $p$ has the
form $p = \Psi(k) \varphi(w) k$, where $w$ is a $\Psi'$-palindrome
and $\varphi \Psi' = \Psi \varphi$. According to Property 4, we
have $\Psi' = \Psi_{(b-1)(n-1)+k}$. In particular, $\Psi'(k) w k$
is $\Psi'$-palindromic suffix of $\varphi^{n-1}(0)
\varphi^{n-1}(1)\varphi^{n-1}(2) \ldots \varphi^{n-1}(k-1)k$.

As $|p| = 2+ |\varphi(w)|> |u| $, necessarily $ |w| >
|\varphi^{n-1}(1)\varphi^{n-1}(2) \ldots \varphi^{n-1}(k-1)|$. It
means that $\varphi^{n-1}(0) \varphi^{n-1}(1)\varphi^{n-1}(2)
\ldots \varphi^{n-1}(k-1)k$ has the longest $\Psi'$-palindromic
suffix longer than $\Psi'(k) \varphi^{n-1}(1)\varphi^{n-1}(2)
\ldots \varphi^{n-1}(k-1)k$ \ -  \ contradiction with the minimality
of $n$.
\end{proof}

\begin{lemma}\label{konec1} 
 Let $v$ be a factor with the suffix $\varphi\bigl((a-1)a\bigr) 1$ and let $b \neq_m 1$. 
Put $\Psi = \Psi_{a+b}$. 
Under these assumptions, the longest $\Psi$-palindromic suffix $p$ of the factor $v$ is of length at least 2.
Moreover, for  $|p|$ and the parameters $a$ and $b$, the following holds:
\begin{enumerate}
\item if $|p| \geq b + 1$, then  $p= \Psi(1)\varphi(w)1$, where $w$ is a $\Psi_{a+1}$-palindrome of length at least $2$;
\item if $3 \leq |p| \leq b$, then $a+b=_m 1$ and $b> m$;
\item if $|p|=2$, then  either  $a+b \neq_m 1$ or  $a+b =_m 1$  with $b\leq m$.

\end{enumerate}
\end{lemma}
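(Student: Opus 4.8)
The plan is to read off the last $2b+1$ letters of $v$ explicitly and bookkeep their jumps. Writing $\varphi(a-1)=(a-1)a\cdots(a+b-2)$ and $\varphi(a)=a(a+1)\cdots(a+b-1)$, the suffix $\varphi\bigl((a-1)a\bigr)1$ consists of two arithmetic progressions of length $b$ followed by the single letter $1$; indexing the letters $r_0,r_1,r_2,\dots$ from the right, one has $r_0=1$, $r_i=_m a+b-i$ for $1\le i\le b$, and $r_i=_m a+2b-1-i$ for $b+1\le i\le 2b$. In particular the last two letters are $a+b-1$ and $1$, and since $\Psi_{a+b}(1)=a+b-1$ they form a $\Psi$-palindrome; hence $|p|\ge 2$. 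There are only two candidate jump positions in this window: one at the block boundary between $\varphi(a-1)$ and $\varphi(a)$ (left-index $b$), which is always present because $b\ne_m 1$, and one between $a+b-1$ and $1$ (left-index $0$), which is present precisely when $a+b\ne_m 1$. This dichotomy on $a+b=_m 1$ drives the whole argument, and I would carry the two regimes in parallel.

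For case~1 I would first show that no $\Psi$-palindromic suffix has length $L$ with $b+1\le L\le 2b$, so that $|p|\ge b+1$ in fact forces $|p|\ge 2b+1$. This is where Property~3 (symmetry of the jump positions of a $\Psi$-palindrome) does the work: for $b+3\le L\le 2b$ the reflection $L-2-b$ of the boundary jump lies strictly inside the open interval $(0,b)$ and hence is not a jump, which excludes these lengths outright; the two remaining borderline lengths $L=b+1$ and $L=b+2$ are then killed by the explicit endpoint sums $r_0+r_b=_m a+1$ and $r_0+r_{b+1}=_m a+b-1$, neither of which is $=_m a+b$ since $b\ne_m 1$. Once $|p|\ge 2b+1$, the word $p$ has the suffix $\varphi\bigl((a-1)a\bigr)1$, so Lemma~\ref{pomocne} applies with $a_3=1$ and yields $p=\Psi(1)\varphi(w)1$ with $w$ a $\Psi'$-palindrome of length $\ge 2$; Property~\ref{komutace1} with $x=a+b$ identifies $\Psi'=\Psi_{a+1}$, which is exactly the claimed form.

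Cases~2 and~3 only concern $|p|\le b$, so by case~1 it suffices to decide which lengths $L\in[2,b]$ admit a $\Psi$-palindromic suffix. If $a+b\ne_m 1$, the end jump at left-index $0$ has no symmetric partner among the positions below $b$ (its reflection $L-2$ lies in $[1,b-2]$, where there is no jump), so Property~3 forbids every such suffix of length in $[3,b]$ and only $L=2$ survives. If $a+b=_m 1$, the final $b+1$ letters $a,a+1,\dots,a+b-1,1$ form one progression (the end jump disappears), and its length-$L$ suffix with $L\le b$ is a $\Psi_{a+b}$-palindrome iff $2(a+b)-(L-1)=_m a+b$, which using $a+b=_m 1$ reduces to $L=_m 2$. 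Consequently, if $3\le|p|\le b$ then the regime must be $a+b=_m 1$, and $|p|=_m 2$ together with $|p|\ge 3$ forces $|p|\ge m+2$, whence $b\ge|p|>m$; this is case~2. If instead $|p|=2$, it remains to exclude the combination $a+b=_m 1$ with $b>m$: were it to hold, then $b\ne_m 1$ gives $b\ge m+2$, so the length $m+2\le b$ would already be a $\Psi$-palindromic suffix, contradicting $|p|=2$; hence either $a+b\ne_m 1$, or $a+b=_m 1$ with $b\le m$, which is case~3.

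The step I expect to be the main obstacle is clearing the gap $b+1\le L\le 2b$ so that Lemma~\ref{pomocne} becomes available: Property~3 controls only where jumps sit and not the actual letters, so the borderline lengths have to be excluded by hand from the explicit values $r_0,r_b,r_{b+1}$, and the two regimes $a+b=_m 1$ and $a+b\ne_m 1$ must be kept carefully apart throughout, since the available jumps -- and therefore the admissible palindrome lengths -- differ between them.
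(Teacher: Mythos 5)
Your proof is correct and takes essentially the same route as the paper's: explicit bookkeeping of the two candidate jumps in the length-$(2b+1)$ suffix window, jump symmetry (Property 3) to pin down the admissible palindromic-suffix lengths, the regime split on $a+b =_m 1$ with the arithmetic-progression endpoint analysis, and Lemma \ref{pomocne} together with Property \ref{komutace1} for the long case. The only difference is organizational — you clear the lengths $b+1 \leq L \leq 2b$ uniformly up front before splitting on $a+b =_m 1$, whereas the paper folds that range into its two regime cases — but the substance is identical.
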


\begin{proof}

Since the last two letters of $v$ are $(a+b - 1)1$, and $\Psi (1) = a + b - 1$, the word $v$ has a palindromic suffix of length $2$.

If $p$ itself has the suffix $\varphi\bigl((a-1)a\bigr) 1$, then the form of $p$ is given by   Lemma \ref{pomocne} as $p = \Psi(1) \varphi(w)1$.  According to
Properties \ref{komutace1} and \ref{komutace2} in Section \ref{T-M}, we have
$\Psi_{a+b}\varphi = \varphi\Psi_{a+1}$ and thus $w$ is a
$\Psi_{a+1}$-palindrome.\\

Let $p$ be shorter than the suffix $\varphi\bigl((a-1)a\bigr) 1$.
It means that $p$ is a suffix of the factor $a(a+1)\ldots
(a+b-2)a(a+1) \ldots (a+b-1)1.$   Since $b-1\neq_m 0 $, we have a
jump between letters $a+b-2$ and $a$. Let us discuss the following two cases
separately:

i)  If  $a+b\neq_m 1$, then the other jump is between the last two
letters $a+b-1$ and  $1$. In the $\Psi$-palindrome, 
jump positions must be symmetric with respect to the center, and thus
the only two  candidates for the palindromic suffix are  $(a+b-2)
a(a+1)\ldots (a+b-1)1$ and $(a+b-1)1$. Since $\Psi(1) =
\Psi_{a+b}(1) \ne_m a+b-2$, only the latter possibility  $p= \Psi(1)1$
occurs.

ii) If  $a+b=_m  1$, then $a(a+1)\ldots (a+b-2)a(a+1) \ldots
(a+b-1)1$  has only one jump, namely, as we mentioned above, between letters $a+b-2$ and $a$.  Therefore
the longest palindromic suffix $p$ does not contain any jumps. It
implies, that $p$ is a suffix of $a(a+1)\ldots (a+b-1)1$.  Let $k
\in \{0,1,\ldots, m-1\}$ be a letter such that $p=(a+k)(a+k+1)
\ldots (a+b-1)1$. Then $a+k = \Psi(1) = a+b-1 =_m 0$. Or
equivalently,  $k=_m b-1$.

If $b\leq m$,  the equality $k=_m b-1$ has the only solution
$k=b-1$, i.e., $p= (a+b-1)1 = \Psi(1)1$, as before.

If $b> m$, then  the smallest  $k \in\{0,1,\ldots, b-1\}$  solving
$k =_m b-1 $, satisfies $k \leq b-1-m \leq  b-3$, and as well $k > 0$
(since $ k = b-1 \neq_m 0$). As  $p=(a+k)(a+k+1) \ldots
(a+b-1)1$ has the length  $|p| = b-k+1$,  we get $3\leq  |p| \leq b$.
\end{proof}

The following claim addresses the question of the length of the longest $\Psi$-palindromic suffix
of the factor $\varphi^n(0)1$. 
\begin{claim}\label{claim2} Let $b \neq_m 1$. Put $q = \min\{ i \in \mathbb{N} \colon i> 0 \text{ and } i (b-1) =_m 0 \}$ and for a fixed  $n\in \mathbb{N}$ denote  $\Psi =
\Psi_{(b-1)n+1}$.

\begin{enumerate}
\item If $b\leq m$, then the longest $\Psi$-palindromic suffix of the factor $\varphi^n(0)1$ is of length $2$.

 \item If $b>m$ and $n< q$, then the longest $\Psi$-palindromic suffix of the factor $\varphi^n(0)1$ is of length $2$.

\item If $b>m$   and $n=q$,  then the longest $\Psi$-palindromic suffix of the factor $\varphi^n(0)1$ is of length greater than $2$ and less than $b+1$.
\end{enumerate}

\end{claim}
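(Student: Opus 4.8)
The plan is to apply Lemma \ref{konec1} to the factor $v=\varphi^n(0)1$. First I would record the structural data: for $n\ge 2$ the word $\varphi^{n-1}(0)$ ends in the two consecutive letters $(a-1)a$ with $a=(n-1)(b-1)$, so $\varphi^n(0)=\varphi(\varphi^{n-1}(0))$ ends in $\varphi\bigl((a-1)a\bigr)$ and hence $v$ has the suffix $\varphi\bigl((a-1)a\bigr)1$ required by Lemma \ref{konec1}, with $\Psi=\Psi_{a+b}=\Psi_{(b-1)n+1}$. Since $a+b=(b-1)n+1$, we have $a+b=_m 1$ exactly when $q\mid n$; note also that $b\ne_m1$ forces $q\ge 2$. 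The small indices $n=0,1$ I would treat directly: $\varphi^0(0)1=01$ and $\varphi^1(0)1=0\,1\cdots(b-1)\,1$ each have longest $\Psi$-palindromic suffix of length $2$, the single jump between the last two letters (present because $b\ne_m1$) being the only one available. As $q\ge 2$, these indices only occur in parts 1 and 2, where length $2$ is exactly what is claimed.

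The heart of the argument is to show that the first alternative of Lemma \ref{konec1}, namely $|p|\ge b+1$, never occurs. I would isolate this as a sublemma: for every $j\ge 0$ there is no $\Psi_{(b-1)j+1}$-palindrome $w$ with $|w|\ge 2$ that is a suffix of $\varphi^j(0)$, begins with the letter $1$, and is immediately preceded in $\varphi^j(0)$ by the letter $L_j:=j(b-1)$. This suffices, because if $|p|\ge b+1$ the first item of Lemma \ref{konec1} yields $p=\Psi(1)\varphi(w)1$ with $w$ a $\Psi_{a+1}$-palindrome, $|w|\ge2$; as $\varphi(w)$ is a suffix of $\varphi^n(0)$ of length divisible by $b$ it starts at a block boundary, so by injectivity of $\varphi$ the word $w$ is a suffix of $\varphi^{n-1}(0)$ beginning with $1$ and preceded by $L_{n-1}$ — precisely a witness at $j=n-1$. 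I would prove the sublemma by induction on $j$. The base $j=0$ is vacuous, and $j=1$ fails because any suffix of $\varphi^1(0)=0\,1\cdots(b-1)$ beginning with $1$ is preceded by $0$, whereas $L_1=b-1\ne_m0$. For the step I use $\Psi_{(b-1)j+1}\varphi=\varphi\Psi_{(b-1)(j-1)+1}$ (Property \ref{komutace1}) to force block-alignment of $w$: the within-block possibility $2\le|w|<b$ would make $w=1\,2\cdots L_j$ preceded by $0$, contradicting ``preceded by $L_j$'' since $L_j=|w|\ge2\ne_m0$; and for $|w|\ge b$ the pseudopalindrome symmetry carries the final block $\varphi(L_{j-1})$ onto the initial block $\varphi(1)$, giving $w=\varphi(w_1)$ with $w_1$ a $\Psi_{(b-1)(j-1)+1}$-palindromic suffix of $\varphi^{j-1}(0)$ beginning with $1$ and preceded by $L_{j-1}$. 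If $|w_1|=1$ then $w_1=L_{j-1}=1$ is preceded by $0\ne_m1$, impossible; so $|w_1|\ge2$, contradicting the induction hypothesis at $j-1$.

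Once the first alternative is excluded we always have $2\le|p|\le b$, and the three parts follow from the remaining items of Lemma \ref{konec1}. For part 1, $b\le m$ rules out the second item (which demands $b>m$), leaving $|p|=2$. For part 2, $0<n<q$ gives $q\nmid n$, i.e. $a+b\ne_m1$, so the second item is again excluded and $|p|=2$. For part 3, $n=q$ gives $a+b=_m1$ together with $b>m$, so the third item (which for $a+b=_m1$ forces $b\le m$) cannot produce $|p|=2$; combined with $|p|\ge2$ and $|p|\le b$ this yields $2<|p|\le b<b+1$, as claimed.

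The step I expect to be the main obstacle is the block-alignment claim inside the sublemma: proving rigorously that a $\Psi_{(b-1)j+1}$-palindromic suffix $w$ of $\varphi^j(0)$ with $|w|\ge b$ must begin with the complete block $\varphi(1)$ and hence have length divisible by $b$. This rests on combining the jump at the last block boundary (present because $b\ne_m1$) with the symmetry of jump positions in a pseudopalindrome (Property 3) and then iterating; the careful bookkeeping of the ``preceded by $L_j$'' condition through the reduction $w\mapsto w_1$ is also where the hypothesis $b\ne_m1$ is genuinely used.
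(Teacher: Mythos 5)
Your overall skeleton (base cases $n=0,1$, application of Lemma \ref{konec1} with $a=(n-1)(b-1)$, reduction of the alternative $|p|\geq b+1$ to level $n-1$, and the final case analysis via items 2 and 3 of Lemma \ref{konec1}) is sound and close to the paper's, but your sublemma is false as stated, and the step that fails is precisely the one you use to kill the within-block case. You argue that a suffix $w=1\,2\cdots L_j$ of the last block, being preceded by $0$, cannot be preceded by $L_j$ ``since $L_j=|w|\geq 2\neq_m 0$''. But the equality $L_j=|w|$ holds only modulo $m$, and $|w|\geq 2$ does not imply $|w|\neq_m 0$ once $b>m$, because then $m\leq |w|<b$ is possible; this happens exactly when $q\mid j$. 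Concretely, take $b=4$, $m=2$ (so $q=2$): $\varphi^2(0)=0101101001011010$, and $w=10$ is a $\Psi_{(b-1)2+1}=\Psi_1=E$-palindromic suffix of $\varphi^2(0)$ of length $2$ which begins with $1$ and is preceded by the letter $0=_m L_2$ --- a witness at $j=q=2$, so your sublemma fails there. Consequently your headline claim that the first alternative of Lemma \ref{konec1} \emph{never} occurs is also false: at $n=3$ the word $\varphi^3(0)1$ has the $\Psi_{(b-1)3+1}=\Psi_0$-palindromic (i.e.\ $R$-palindromic) suffix $1\,\varphi(10)\,1=1101001011$ of length $10>b+1$, so its longest pseudopalindromic suffix falls under Case 1.

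The damage is localized, because in your final assembly you invoke the sublemma only at $j=n-1$ with $n\leq q$ when $b>m$, and for $b\leq m$ your within-block inference is actually valid (there $|w|<b\leq m$ really does give $L_j\bmod m=|w|\geq 2$, hence $L_j\neq_m 0$). So the proof can be repaired by weakening the sublemma to indices $j$ with $q\nmid j$ (equivalently $L_j\neq_m 0$), for which your induction goes through, noting that the instances you need, $j=0,1,\ldots,q-1$, are closed under the step $j\mapsto j-1$. This is in effect what the paper does, in a way that dodges the trap: it takes the \emph{minimal} $n$ for which $\varphi^n(0)1$ has a $\Psi_{(b-1)n+1}$-palindromic suffix of length $\geq 3$, rules out Case 1 only at that minimal $n$ (by producing the shorter witness $a\,w\,1$ at level $n-1$), and lets Case 2 force $n(b-1)=_m 0$ and $b>m$, hence $n\geq q$; it never asserts that Case 1 is impossible for all $n$, which, as the example above shows, it is not.
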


\begin{proof}   As $b-1 \neq_m 0$, we have $q\geq 2$.   First we show that Claim holds for $n=0$ and
$n=1$.

Consider $n=0$.
The factor $01$ is a $\Psi_1$-palindrome as $\Psi_1(0) = 1$.

Consider $n=1$. The factor $\varphi(0)1 = 01\ldots (b-1)1$ has
only one  jump, namely  between two last letters $b-1$ and $1$
because of  $b-1 \neq_m 0$.  Thus $\varphi(0)1$ cannot have
$\Psi$-palindromic suffix longer than $2$.  Since $\Psi(b-1) =
\Psi_{b} (b-1) = 1$, the factor $\varphi(0)1$ has the longest
$\Psi$-palindromic suffix of length  $2$.

Now suppose that there exists an index $n$ such that
$\varphi^n(0)1$  has a $\Psi$-palindromic suffix of length at
least  $3$. Consider the smallest such $n$. Obviously, $n \geq 2$.
Denote by $p$  the longest  $\Psi$-palindromic suffix  of
$\varphi^n(0)1$. We will apply  Lemma \ref{konec1} with $v =
\varphi^n(0)1 = \varphi\Bigl(\varphi^{n-1}(0)\Bigr)1 $. Since the
last letter of $\varphi^{n-1}(0)$ equals to $(n-1)(b-1)$, we
denote  $a= (n-1)(b-1)$. For this choice of $a$,   the
antimorphism  $\Psi = \Psi_{(b-1)n+1}=  \Psi_{a+b}$ is as required in
 Lemma \ref{konec1}.  Since $|p|\geq 3$, only Cases 1 and  2  from Lemma \ref{konec1}
 apply:\\

Case 1: \ \  $p= \Psi(1)\varphi(w)1 = (a+b-1)\varphi(w)1$ for some
factor $w$ of length  $|w|\geq 2$ and $w$  a
$\Psi_{a+1}$-palindrome.

Let us realize that  $\varphi^n(0)1$  is a prefix of  ${\bf
t}_{b,m}$ for any $n$ and thus $\varphi^n(0)\varphi(1)$ is its
prefix as well. Since $(a+b-1)$ is the last letter of $\varphi(a)$,
we can deduce that  $\varphi(a)\varphi(w)\varphi(1)$ is a suffix
of $\varphi^n(0)\varphi(1)$ and thus $aw1$ is a suffix of
$\varphi^{n-1}(0)1$. Moreover, $aw1$ is a $\Psi_{a+1}$-palindrome
of length $\geq 4$. This means that $\varphi^{n-1}(0)1$ has a
$\Psi'$-palindromic suffix of length greater than 2, where $\Psi'=
\Psi_{a+1} = \Psi_{(n-1)(b-1) +1}$. This is a contradiction with the minimality of $n$.\\

Case 2: \ \ $a+b=_m 1$, $b> m$.

  Since we denoted $a=
(n-1)(b-1)$, we have $n(b-1) =_m 0$.   The smallest $n$
satisfying this equality was denoted by $q$. \\

We can conclude:  If $b\leq m$ then for all $n$, the longest
$\Psi$-palindromic suffix  of $\varphi^n(0)1$has length $2$; 
if  $b>m$ then for  all
$n <q$, the longest $\Psi$-palindromic suffix  $\varphi^n(0)1$ has length $2$; 
if $b>m$  and $n=q$, then the longest $\Psi$-palindromic suffix  $\varphi^n(0)1$ has
length  among numbers $3,4, \ldots , b$.
\end{proof}

\begin{lemma}\label{konecN1} Let $b \neq_m 1$. Denote $q = \min\{ i \in \mathbb{N} \colon i> 0 \text{ and } i (b-1) =_m 0 \}$. Fix   $n\in \mathbb{N}$ and   put $\Psi =
\Psi_{(b-1)n+1}$.

\begin{enumerate}

\item If $b\leq m$,  then $\Bigl(\varphi^n(0)1\Bigr)^\Psi =
\varphi^n(0)\varphi^n(1)$.

\item If $b> m$ and $n < q$, then $\Bigl(\varphi^n(0)1\Bigr)^\Psi =
\varphi^n(0)\varphi^n(1)$.

\item If $b> m$,  then $
 \Bigl(\varphi^q(0)1\Bigr)^\Psi$ is not a prefix of  ${\bf t}_{b,m}.$
 \end{enumerate}
\end{lemma}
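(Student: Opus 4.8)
My plan is to obtain all three items from the palindromic-closure identity $w^\Psi = uq\Psi(u)$, where $q$ is the longest $\Psi$-palindromic suffix of $w$ and $w=uq$, feeding it the length of the longest $\Psi$-palindromic suffix of $\varphi^n(0)1$ that Claim~\ref{claim2} has already determined. Throughout I will use the iterated form of Property~\ref{komutace1}, namely $\Psi_x\varphi^n = \varphi^n\Psi_{x-n(b-1)}$, which for $x=(b-1)n+1$ gives $\Psi\varphi^n = \varphi^n\Psi_1$ and hence $\Psi(\varphi^n(0)) = \varphi^n(1)$.

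For items (1) and (2) Claim~\ref{claim2} says the longest $\Psi$-palindromic suffix has length $2$. Since the last letter of $\varphi^n(0)$ is $n(b-1)$ and $\Psi(1) = n(b-1)$, this suffix is $\bigl(n(b-1)\bigr)1$. Writing $\varphi^n(0) = u'\bigl(n(b-1)\bigr)$, the closure identity gives $(\varphi^n(0)1)^\Psi = u'\bigl(n(b-1)\bigr)1\,\Psi(u')$. Applying the antimorphism to $\varphi^n(0) = u'\bigl(n(b-1)\bigr)$ and using $\Psi(\varphi^n(0)) = \varphi^n(1)$ together with $\Psi\bigl(n(b-1)\bigr) = 1$ yields $\varphi^n(1) = 1\,\Psi(u')$; substituting this back shows $(\varphi^n(0)1)^\Psi = \varphi^n(0)\varphi^n(1)$, which settles both cases at once.

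For item (3) I would invoke Claim~\ref{claim2}(3): with $b>m$ and $n=q$ the longest $\Psi$-palindromic suffix $p$ has length between $3$ and $b$, so, as in the proof of Lemma~\ref{konec1} (case ii), $p = (a+k)\cdots(a+b-1)1$ with $a=(q-1)(b-1)$ and $k$ the least positive solution of $k=_m b-1$, whence $0<k\le b-3$. Because $q(b-1)=_m 0$ we have $\Psi = \Psi_{(b-1)q+1} = \Psi_1$. Factoring $\varphi^q(0)=ur$ with $r$ its last $b-k$ letters, the closure becomes $(\varphi^q(0)1)^\Psi = \varphi^q(0)\,1\,\Psi(u)$ of length $2b^q-b+k+1$, and from $\Psi(\varphi^q(0))=\varphi^q(1)=\Psi(r)\Psi(u)$ I identify $\Psi(u)$ as the suffix of $\varphi^q(1)$ with its first $b-k$ letters removed.

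The heart of the argument, and the step I expect to be most delicate, is to locate the first place where this closure parts company with $\mathbf{t}_{b,m}=\varphi^q(0)\varphi^q(1)\varphi^q(2)\cdots$. Both words share the prefix $\varphi^q(0)1$; afterwards $\mathbf{t}_{b,m}$ continues with $\varphi^q(1)$ read from position $1$, while the closure continues with $\varphi^q(1)$ read from position $b-k$. The first $b$ letters of $\varphi^q(1)$ are $1,2,\dots,b \pmod m$, and since $b-k+1=_m 2$ the two readings agree while both indices remain inside this arithmetic block. The closure's index leaves the block first: at offset $k$ it exhibits $(\varphi^q(1))_b$, which for $q\ge 2$ equals $2 \pmod m$, whereas $\mathbf{t}_{b,m}$ exhibits $(\varphi^q(1))_{k+1}=_m b+1$. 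These letters differ exactly because $b\neq_m 1$, and as $q\ge 2$ the offending position $b^q+1+k$ is strictly less than the closure length $2b^q-b+k+1$; hence the closure is not a prefix of $\mathbf{t}_{b,m}$. The only real bookkeeping is tracking these offsets and confirming, via $q\ge 2$, that the discrepancy falls inside the closure.
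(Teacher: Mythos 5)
Your proof is correct; parts (1) and (2) follow the paper's own argument, but your part (3) takes a genuinely different route. For (1) and (2) both you and the paper reduce to Claim~\ref{claim2} (longest $\Psi$-palindromic suffix of length $2$) and then apply the closure formula together with the iterated commutation $\Psi\varphi^n=\varphi^n\Psi_1$ from Property~\ref{komutace1} to get $(\varphi^n(0)1)^\Psi=\varphi^n(0)\Psi(\varphi^n(0))=\varphi^n(0)\varphi^n(1)$. For (3), however, the paper never computes the closure explicitly: it uses only the length information $3\le s\le b$ from Claim~\ref{claim2}, observes that the closure then has length $\not\equiv 0 \pmod b$, and derives the contradiction structurally --- jumps in a pseudopalindrome must be placed symmetrically about its center, whereas jumps of $\mathbf{t}_{b,m}$ occur only at positions $\equiv -1 \pmod b$, and these two constraints are compatible only when the length is divisible by $b$. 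You instead reconstruct the closure word explicitly, using the exact form $p=(a+k)\cdots(a+b-1)1$ with $k=_m b-1$ taken from case (ii) of the \emph{proof} (not the statement) of Lemma~\ref{konec1}, so your argument quietly re-uses that internal step; this is legitimate, since given $3\le |p|\le b$ the $\Psi$-palindromicity of $p$ forces $a+k=\Psi(1)=a+b-1$ in one line. You then exhibit a concrete mismatch: at offset $k$ past the common prefix $\varphi^q(0)1$, the closure shows $(\varphi^q(1))_b=2$ while $\mathbf{t}_{b,m}$ shows $(\varphi^q(1))_{k+1}=_m b+1$, and these differ exactly because $b\neq_m 1$; the inequality $b<b^q$, i.e.\ $q\ge 2$, guarantees the mismatch falls strictly inside the closure. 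I checked the offset arithmetic and the digit-sum facts $(\varphi^q(1))_n=1+s_b(n)$ for $n<b^q$, and they hold. What the paper's argument buys is brevity and robustness: it needs nothing about the palindromic suffix beyond its length modulo $b$. What yours buys is an explicit certificate of failure --- the first position where the closure and $\mathbf{t}_{b,m}$ part company --- which is more informative, at the cost of longer bookkeeping and dependence on finer structure.
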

\begin{proof}  Let us denote by $s$ the length of the longest $\Psi$-palindromic suffix of
$\varphi^n(0)1$. We will apply Claim \ref{claim2}.

If $s=2$, we clearly have   $\Bigl(\varphi^n(0)1\Bigr)^\Psi  =
\varphi^n(0) \Psi \bigl( \varphi^n(0) \bigr)$.  According to
Property \ref{komutace1}, $\Psi \varphi^n = \varphi^n \Psi_{1}$
and thus $\Psi \bigl( \varphi^n(0)\bigr) =
\varphi^n\bigl(\Psi_1(0)\bigr) =\varphi^n(1)$.

Consider now  $s \in\{3,4,\ldots,b\}$.  Then
$\Bigl(\varphi^n(0)1\Bigr)^\Psi $ has length
$2|\varphi^n(0)|+2-s$. 
From  the form of the substitution $\varphi$, and the fact that  ${\bf t}_{b,m}$ is its fixed point, it follows that 
a  jump   in  ${\bf t}_{b,m} = u_0u_1u_2\cdots$
can occur only   on indices $i-1 =_b -1 $.
 Since $\varphi^n(0)$ is a prefix of  ${\bf t}_{b,m}$,
 the prefix  of the  palindrome $\Bigl(\varphi^n(0)1\Bigr)^\Psi$
 with length $| \varphi^n(0)|$, has jumps on positions $i-1 =_b -1$. Jumps in any palindrome occur symmetrically with respect to
the center of the palindrome.  The length of the palindrome
$\Bigl(\varphi^n(0)1\Bigr)^\Psi$  is $(2-s) \bmod b$.  As $2-s\neq_b
0$, jumps in the left part of
$\Bigl(\varphi^n(0)1\Bigr)^\Psi$ are not compatible  with  the jump
positions  in  ${\bf t}_{b,m}\,$ and thus
$\Bigl(\varphi^n(0)1\Bigr)^\Psi$  cannot be a prefix of ${\bf
t}_{b,m}.$
\end{proof}

Now we are ready to complete the proof of Proposition \ref{JdeTo}  for the
non-periodic Thue--Morse words.

\begin{proof}
From  Lemma \ref{konecN1}, Part 1, we get the first identity in the following list; the others follow from Lemma \ref{lemmaVic}:
\begin{description}
\item[1)] $ \Bigl(\varphi^n(0)1\Bigr)^\Psi =
\varphi^n(0)\varphi^n(1)$ \quad if  $\Psi = \Psi_{(b-1)n+1}$ \ \
{\rm and} \ \  $b\leq m$;

\item[2)] $ \Bigl(\varphi^n(0)\varphi^n(1)2\Bigr)^\Psi =
\varphi^n(0)\varphi^n(1)\varphi^n(2)$  \quad  if $\Psi =
\Psi_{(b-1)n+2}$;

\item[$\vdots$]

\item [b-1)]$ \Bigl(\varphi^n(0)\varphi^n(1) \ldots \varphi^n(b-2)
(b-1)\Bigr)^\Psi = \varphi^n(0)\varphi^n(1) \ldots \varphi^n(b-1)$
\quad if  $\Psi = \Psi_{(b-1)n+b-1}$.
\end{description}

\noindent Since ${\bf t}_{b,m} = \lim\limits_{n \to \infty} \varphi^n(0)$, this together with the simple fact
$$ \varphi^n(0)\varphi^n(1) \ldots \varphi^n(b-1)= \varphi^{n}\bigl(\varphi(0)\bigr) = \varphi^{n+1}(0) $$ finishes the proof of Proposition \ref{JdeTo}.
\end{proof}

\begin{prop}\label{NejdeTo} Let $m,b \in \mathbb{Z}$.
If $b> m$ and  $b \neq 1 \pmod m$, then  ${\bf t}_{b,m }$
 is not a generalized pseudostandard word.
\end{prop}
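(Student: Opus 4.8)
The plan is to argue by contradiction: suppose ${\bf t}_{b,m} = {\bf u}_\Theta(\Delta)$ for some directive sequences, and let $(w_n)$ be the associated prefixes. First I would note that each $\delta_n$ is forced, since $w_{n-1}\delta_n$ is a prefix of ${\bf t}_{b,m}$, so $\delta_n$ is just its next letter. Next I would reduce the antimorphisms to $I_2(m)$ in the tail: each $w_n$ is a $\vartheta_n$-palindromic prefix, and once $|w_n|$ is large enough to contain a full block $\varphi(z)=z(z+1)\cdots(z+b-1)$ (which happens for all large $n$, and which, because $b>m$, exhibits every adjacency $x,x+1$ with $x\in\mathbb{Z}_m$ and shows all letters occur), the equation $\vartheta_n(w_n)=w_n$ together with the symmetry of jump positions forces the underlying involution $\pi_n$ to satisfy $\pi_n(x+1)=\pi_n(x)-1$ for every $x$, hence $\pi_n=\Psi_c|_{\mathbb{Z}_m}$ and $\vartheta_n\in I_2(m)$ (Property 7 then pins down $c$). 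Thus every sufficiently long $w_n$ is a $\Psi$-palindromic prefix for some $\Psi\in I_2(m)$.

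Then I would classify all such prefixes. Jumps in ${\bf t}_{b,m}$ occur only at positions $\equiv -1 \pmod b$, they are symmetric in any pseudopalindrome (Property 3), and a prefix of length $>2b$ contains a jump (Property 1); combining these shows that every pseudopalindromic prefix has length divisible by $b$, hence by Property 5 and injectivity of $\varphi$ is the $\varphi$-image of a shorter one. Unwinding the recursion, the pseudopalindromic prefixes are exactly $P_{n,k}=\varphi^n(012\cdots k)$ with $n\ge 0$, $0\le k\le b-1$, of length $(k+1)b^n$ and fixed precisely by $\Psi_{k+n(b-1)}$ (Property 4). The decisive consequence is a length gap: with $q=\min\{i>0 : i(b-1)=_m 0\}$ (so $q\ge 2$ and $q(b-1)=_m 0$, whence $P_{q,k}$ is fixed by $\Psi_k$), no pseudopalindromic prefix has length in the open interval $(b^q,2b^q)$, and the unique one of length $2b^q$ is $P_{q,1}$, fixed by $\Psi_1$.

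With this in hand I would isolate the bottleneck at $\varphi^q(0)=P_{q,0}$. Let $j^*$ be the first index with $|w_{j^*}|>b^q$; then $|w_{j^*-1}|\le b^q$ and $w_{j^*}=(w_{j^*-1}\delta_{j^*})^{\vartheta_{j^*}}$ must again be a prefix, hence some $P_{n,k}$ of length $\ge 2b^q$. Writing the closure length as $2|w_{j^*-1}\delta_{j^*}|-s$, where $s$ is the length of the longest $\vartheta_{j^*}$-palindromic suffix of $w_{j^*-1}\delta_{j^*}$, the length gap forces the target to be exactly $2b^q$, so $w_{j^*}=P_{q,1}$ and therefore $\vartheta_{j^*}=\Psi_1$. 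If $w_{j^*-1}=\varphi^q(0)$ (so $\delta_{j^*}=1$), this needs the longest $\Psi_1$-palindromic suffix of $\varphi^q(0)1$ to have length $2$; but Claim \ref{claim2}(3) says it lies in $\{3,\dots,b\}$, a contradiction. If $|w_{j^*-1}|<b^q$, the same arithmetic forces $w_{j^*-1}\delta_{j^*}=\varphi^q(0)$ and $s=0$, i.e. $\varphi^q(0)$ would have no nonempty $\Psi_1$-palindromic suffix; but its last block ends in a letter $=_m 0$ and, since $b>m$, contains a letter $=_m 1$, and the increasing run from the last such letter to the end is a nonempty $\Psi_1$-palindrome, contradiction again.

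The main obstacle is not any single closure computation, which Claim \ref{claim2} and Lemma \ref{konecN1} already supply, but the global step of making the bottleneck unavoidable for \emph{every} directive sequence. The delicate points are (i) establishing that the directive antimorphisms must lie in $I_2(m)$ and that the admissible prefixes are exactly the $P_{n,k}$, so that the length gap around $b^q$ is genuine, and (ii) excluding a ``long jump'' that skips $\varphi^q(0)$ entirely (the case $|w_{j^*-1}|<b^q$), which is precisely where one must verify that $\varphi^q(0)$ always carries a nonempty $\Psi_1$-palindromic suffix. Once these are secured the length gap does the rest: the construction can neither stop at $\varphi^q(0)$ and step to $P_{q,1}$ (the closure overshoots to a non-prefix) nor bypass it, so no choice of $\Delta$ and $\Theta$ can reproduce ${\bf t}_{b,m}$.
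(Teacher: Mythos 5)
Your proposal is correct and follows essentially the same route as the paper's proof: classify the pseudopalindromic prefixes of ${\bf t}_{b,m}$ as $\varphi$-images of shorter ones (hence the words $\varphi^n(01\cdots k)$), use the bound $|w_{i+1}|\leq 2|w_i|+2$ together with the resulting length gap to force any directive construction through the step $\varphi^q(0)\mapsto \varphi^q(0)\varphi^q(1)$ with antimorphism $\Psi_1$, and then contradict Claim~\ref{claim2}(3) (equivalently Lemma~\ref{konecN1}(3)). The only differences are added rigor rather than a different argument: you justify explicitly that long pseudopalindromic prefixes can only be fixed by elements of $I_2(m)$ (a point the paper passes over when it deduces $\Psi=\Psi_{n(b-1)+1}$ from the single value $\Psi(1)=n(b-1)$), and you exclude the case $|w_{j^*-1}|=b^q-1$ by exhibiting a nonempty $\Psi_1$-palindromic suffix of $\varphi^q(0)$, where the paper instead notes that no pseudopalindromic prefix of that length exists.
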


\begin{proof}
First, we show that a pseudopalindromic prefix of ${\bf t}_{b,m }$ which is longer than $b$ is an image of a shorter pseudopalindromic prefix of ${\bf t}_{b,m }$.

Since the word ${\bf t}_{b,m } = 01 \cdots (b-1)1 \ldots $ has its first jump equal to $b-1$, every its pseudopalindromic prefix $p$ longer than $b$ has a jump $|p|-b$.
This implies that $p = \varphi(p')$ for some prefix $p'$.
Since $\Psi(p) = p$ for some antimorphism $\Psi \in I_2(m)$, according to Property 5, we have $\Psi(\varphi(p')) = \varphi(\Psi'(p')) = p = \varphi(p')$  for some antimorphism $\Psi' \in I_2(m)$.
Since $\varphi$ is injective, the last equality implies $\Psi'(p') = p'$, and thus $p'$ is a $\Psi'$-palindromic prefix.

One can see that  for all $n$, $\varphi^n(0)$ and $\varphi^n(01)$ are pseudopalindromic prefixes.
Next, we show  that for each $n \in \N$, the only palindromic prefix of ${\bf t}_{b,m }$ which is longer than
$|\varphi^n(0)|$ and shorter than $2|\varphi^n(0)| + 2$, is the prefix $\varphi^n(0)\varphi^n(1)$.

This part of the proof will proceed by contradiction: Suppose that $n$ is the minimal integer for which the claim does not hold.
Clearly $n > 1$, since the claim can  be easily verified for $n=1$.
Using the fact that every pseudopalindromic prefix of ${\bf t}_{b,m }$ is a $\varphi$-image of a shorter one, we can immediately see that even for $n-1$ the statement does not hold, which is a contradiction with the minimality of $n$.

Since $b > 2$, there is no pseudopalidromic prefix of length $|\varphi^n (0)|-1$.
For the lengths of the words $w_i$ from Definition \ref{def:GPS}, we have that $|w_{i+1}| \leq 2|w_i|+2$ for all $i$.
Therefore, for each $n$, there exists an index $i$ such that $w_i = \varphi^n(0)$ and $w_{i+1} =\varphi^n(0)\varphi^n(1)$.
Let $\Psi$ be the antimorphism which fixes $w_{i+1}$, i.e., $w_{i+1} = (w_i1)^\Psi$.
The lengths of $w_i$ and $w_{i+1}$ imply that the longest $\Psi$-palindromic suffix of $w_i1$ is of length $2$.

Since the last letter of $\varphi^n(0)$ is the letter $n(b-1)$, the antimorphism $\Psi$ satisfies $\Psi (1) = n(b-1)$ and thus $\Psi = \Psi_{n(b-1) + 1}$.

Set $n = q$ where $q$ is the order of $(b-1)$.
It follows from Part 3 of Lemma \ref{konecN1}, that the $\Psi_{q(b-1) + 1}$-palindromic closure of $w_i1$ is not a prefix of ${\bf t}_{b,m}$.
  \end{proof}

\section{Comments and open questions}

\begin{enumerate}

\item As shown in Proposition  \ref{JdeTo}, the word ${\bf
t}_{3,4}$ is a generalized pseudopalindromic word and its directive
sequences are $\Delta = 0(12)^\omega$ and $\Theta=
\Bigl(\Psi_0\Psi_1 \Psi_2\Psi_3\Bigr)^\omega$\,.  One can easily
check that the pairs

$$\Delta = 0(21)^\omega, \ \ \Theta= \Bigl(\Psi_1
\Psi_2\Psi_3\Psi_0\Bigr)^\omega \quad {\rm and} \quad \Delta =
01(12)^\omega, \ \ \Theta= \Psi_0 \Psi_2\Psi_3\Bigl( \Psi_0\Psi_1
\Psi_2\Psi_3\Bigr)^\omega$$

also correspond to the word ${\bf t}_{3,4}$.

The authors of  \cite{BlPaTrVu} study this phenomenon for the
generalized pseudopalindromic word on the binary alphabet, where
$\Delta \in \{0,1\}^\mathbb{N}$ and  $\Theta \in
\{R,E\}^\mathbb{N}$. They defined the notion of a normalized bisequence
and showed (Theorem 27 in  \cite{BlPaTrVu}) that every pseudostandard
word is generated by a unique normalized bisequence. Moreover, for
any generalized pseudopalindromic word  ${\bf u}_{\Theta}(\Delta)$,
a simple algorithms which transforms the pair $\Delta$, $\Theta$ into
the normalized bisequence  is given.

{\bf Question:}\ \  Is it possible to generalize the notion of a
normalized bisequence for the case of a multi-literal alphabet?

\item It is well known the factor complexity of standard episturmian words is bounded by
$(\# \mathcal{A}-1)n + 1$. In
particular, on binary alphabet these words
which are not periodic are precisely standard Sturmian words and
their factor complexity is $\mathcal{C}(n)= n +1$.

In \cite{BlPaTrVu}, the authors conjectured that generalized
pseudostandard words on binary alphabet have their factor complexity bounded by $4n + \, const$.

The factor complexity of binary generalized Thue--Morse words can
be found in \cite{TrSh}. The word ${\bf t}_{2k+1,2}$ is periodic,
and thus its factor complexity is bounded by a constant.  The word
${\bf t}_{2k,2}$ is aperiodic and its factor complexity is $\leq
4n$ for any parameter $k$. It means that even ${\bf t}_{4,2}$ and
${\bf t}_{6,2}$ (which are not generalized pseudopalindromic
words) have a small complexity. It, of course, does not
contradict the conjecture.

The factor complexity of generalized Thue--Morse words  on any
alphabet is deduced in \cite{Sta2011}.  If the word ${\bf t}_{b,m}$
is aperiodic, then
$$ (qm-1)n\leq \mathcal{C}(n) \leq qmn\,,$$
where $q$ is the order of $b-1$ in the additive group
$\mathbb{Z}_m$, i.e. $q$ is the minimal positive integer such that
$q(b-1) =_m 0$.

The factor complexity of any infinite word  can be derived from
knowledge of its bispecial factors. Each aperiodic standard
episturmian word ${\bf u}$ has a nice structure of its bispecial
factors. (A factor $w$ is bispecial if and only if $w$ is a
palindromic prefix of ${\bf u}$.)

{\bf Question:}\ \  Is it possible to describe the structure of bispecial
factors for a generalized pseudostandard word?

\item It is known \cite{DrJuPi} that  classical standard palindromic
words with a periodic directive sequence $\Delta =
(\delta_1\delta_2\ldots \delta_k)^\omega$ are invariant under a
substitution.  For example, the Tribonacci word has the directive
sequence $\Delta = (012)^\omega$ and simultaneously, it is a fixed
point of the substitution $\varphi:  0\mapsto 01, 1 \mapsto 02,
2\mapsto 0$.

Let us denote ${\bf s}_{b,m}$ the generalized pseudostandard word with
$$\Delta = 0\Bigl(12\ldots(b-1)\Bigr)^\omega \quad \hbox{and}
\quad \Theta= \Bigl(\Psi_0\Psi_1 \ldots
\Psi_{m-1}\Bigr)^\omega.$$

If $b\leq m$, then ${\bf s}_{b,m} = {\bf t}_{b,m}$ and obviously
${\bf s}_{b,m} $ is invariant under the substitution described in
\eqref{substitution}.

{\bf Question:}\ \  Is the word  ${\bf s}_{b,m}$ a fixed point of a
substitution if $b>m$?

\end{enumerate}

\section*{Acknowledgements}
The second author acknowledges financial support from the Czech Science Foundation grant 13-03538S
and the last author acknowledges financial support from the Czech Science Foundation grant 13-35273P.

\bibliographystyle{amsplain}
\IfFileExists{biblio.bib}{\bibliography{biblio}}{\bibliography{../!bibliography/biblio}}

\end{document}